\numberwithin{equation}{section} 
\newcounter{mnote}
\theoremstyle{plain}
\newtheorem{theorem}{Theorem}[section]
\newtheorem{proposition}[theorem]{Proposition}
\newtheorem{lemma}[theorem]{Lemma}
\newtheorem{definition}[theorem]{Definition}
\theoremstyle{remark}
\newtheorem{remark}[theorem]{Remark}
\newcommand{\vect}[1]{\mathbf{#1}}
\newcommand{\bu}{\vect{u}}
\newcommand{\bv}{\vect{v}}
\newcommand{\bx}{\vect{x}}
\newcommand{\field}[1]{\mathbb{#1}}
\newcommand{\nN}{\field{N}}
\newcommand{\nT}{\field{T}}
\newcommand{\nZ}{\field{Z}}
\newcommand{\nR}{\field{R}}
\newcommand{\vphi}{\varphi}
\newcommand{\veps}{\varepsilon}
\newcommand{\sand}{\quad\text{and}\quad}
\newcommand{\pd}[2]{\frac{\partial #1}{\partial #2}}
\newcommand{\od}[2]{\frac{d #1}{d #2}}
\newcommand{\abs}[1]{\left\lvert#1\right\rvert}
\newcommand{\norm}[1]{\left\lVert#1\right\rVert}
\newcommand{\set}[1]{\left\{#1\right\}}
\newcommand{\strong}{\rightarrow}
\newcommand{\weakstar}{\stackrel[\ast]{}{\rightharpoonup}}
\newcommand{\convh}{\stackrel[h]{}{\ast}}
\newcommand{\LpP}[1]{\text{$L^{#1}(\nT^3)$}}
\newcommand{\LphP}[1]{\text{$L^{#1}_h(\nT^2)$}}
\newcommand{\WP}[2]{\text{$W^{#1,#2}(\nT^3)$}}
\newcommand{\WhP}[2]{\text{$W^{#1,#2}_h(\nT^2)$}}
\newcommand{\HhP}[1]{\text{$H^{#1}_h(\nT^2)$}}
\newcommand{\HP}[1]{\text{$H^{#1}(\nT^3)$}}
\newcommand{\LtLphP}[2]{\text{$L^{#1}([0,T];L^{#2}_h(\nT^2))$}}
\begin{document}
\title[Global Regularity for Slow Limiting Ocean Dynamics Model]{Global Regularity for an Inviscid Three-dimensional Slow Limiting Ocean Dynamics Model}

\date{November 24, 2013}

\author{Chongsheng Cao}
\address[Chongsheng Cao]{Department of Mathematics\\
                Florida International University\\
                Miami, FL 33199, USA}
\email[Chongsheng Cao]{caoc@fiu.edu}
\author{Aseel Farhat}
\address[Aseel Farhat]{Department of Mathematics\\
                Indiana University, Bloomington\\
        Bloomingtin, IN 47405, USA}
\email[Aseel Farhat]{afarhat@indiana.edu}
\author{Edriss S. Titi}
\address[Edriss S. Titi]{Department of Mathematics, and Department of Mechanical and Aero-space Engineering\\
University of California\\
Irvine, CA 92697, USA.
Also, The Department of Computer Science and Applied Mathematics\\
The Weizmann Institute of Science, Rehovot 76100, Israel.}
\email[Edriss S. Titi]{etiti@math.uci.edu and edriss.titi@weizmann.ac.il}

\keywords{three-dimensional Boussinesq equations, slow-dynamics, ocean model, global regularity.}
\thanks{MSC Subject Classifications: 35Q35, 76B03, 86A10.}
\begin{abstract}
We establish, for smooth enough initial data, the global well-posedness (existence, uniqueness and continuous dependence on initial data) of solutions, for an inviscid three-dimensional {\it slow limiting ocean dynamics} model. This model was derived as a strong rotation limit of the rotating and stratified Boussinesg equations with periodic boundary conditions. To establish our results we utilize the  tools developed  for investigating the two-dimensional incompressible Euler equations and linear transport equations. Using a weaker formulation of the model we also show the global existence and uniqueness of solutions, for less regular initial data.

\end{abstract}
\maketitle
\thispagestyle{empty}

\centerline{\it Dedicated to Professor Marshall Slemrod on the occasion of his $70^{th}$ birthday.}

\section{Introduction}

The questions of global well-posedness of the three-dimesional Navier-Stokes and Euler equations, as well as the three-dimensional Boussinesq equations of incompressible flows, are considered to be among the most challenging mathematical problems in applied analysis. In the context of the atmosphere and the ocean circulation dynamics, geophysicists take advantage of the fast rotation (small Rossby number $Ro$) effect to simplify the Boussinesq equations. The Taylor-Proudman theorem \cite{Pedlosky_1987} suggests that the fluid velocity will be uniform along any line parallel to the axis of rotation and that the fluid motion takes place in tall columnar structures. In a recent work \cite{Wingate_Embid_Cerfon_Taylor_2011}, the authors explored the fast rotation limit (Rossby number $Ro \rightarrow 0$) of the rotating stratified Boussinesq equations with periodic boundary conditions to derive a system for the ``slow" dynamics. Subject to periodic boundary conditions in $\nT^3 = [0,L]^3$, the viscous version of the {\it slow limiting dynamics model,} that was introduced in \cite{Wingate_Embid_Cerfon_Taylor_2011}, is given by:
\begin{subequations}\label{slow_dynamics_eq}
\begin{align}
\frac{\partial \bu_h}{\partial t} + (\bu_h\cdot\nabla_h)\bu_h+ \nabla_h p &= \frac{1}{Re} \Delta_h\bu_h, \label{slow_dynamics_eq_a} \\
\frac{\partial w}{\partial t} + (\bu_h\cdot\nabla_h)w &=  \frac{1}{Re} \Delta_hw - \frac{1}{Fr}\left<\rho\right>_z, \label{slow_dynamics_eq_b}
\end{align}
\begin{align}
\frac{\partial \rho}{\partial t} + (\bu\cdot\nabla)\rho - \frac{1}{Fr}w &= \frac{1}{Re Pr} \Delta\rho, \label{slow_dynamics_eq_c}\\
\nabla_h\cdot \bu_h = 0, \qquad \nabla\cdot\bu &= 0, \label{slow_dynamics_eq_d}
\end{align}
\end{subequations}
where $\bu = (\bu_h, w) = (\bu_h(t;x,y), w(t;x,y))$ is the velocity vector field, $p = p(t;x,y)$ is the pressure, $\rho = \rho(t;x,y,z)$ is the density fluctuation, $\nabla_h = (\frac{\partial}{\partial x}, \frac{\partial}{\partial y})$, $\Delta_h = \frac{\partial^2}{\partial x^2} + \frac{\partial^2}{\partial y^2}$, $\Delta = \frac{\partial^2}{\partial x^2} + \frac{\partial^2}{\partial y^2}+ \frac{\partial^2}{\partial z^2}$, $Re$ is the Reynolds number, $Fr$ is the Froude number and $Pr$ is the Prandtl number. Also, $\left<\rho\right>_z$ is the density average in the vertical direction defined by
$$ \left<\rho\right>_z(t;x,y) := \frac{1}{L} \int_{[0,L]} \rho(t;x,y,z) \, dz. $$

The derivation of this model is based on the assumption that the solution evolves only on the slow advective time scale. For this reason the system was called a {\it slow limiting dynamics model,} even though it was derived in the limit of fast rotation. If the initial data data contains inertial waves then the model has to be modified to take into account the fast inertial waves.


In the {\it slow limiting dynamics model} \eqref{slow_dynamics_eq}, the horizontal component of the velocity $\bu_h$ is governed by the 2D Navier-Stokes equations. Moreover, $\bu_h$ evolves independently of the vertical velocity $w$ and the density fluctuation $\rho$, but it influences the dynamics of these variables through the advection terms in \eqref{slow_dynamics_eq_b} and \eqref{slow_dynamics_eq_c}. The dynamics of the vertical velocity $w$ and the density fluctuation $\rho$ are strongly coupled. Interestingly, the vertical velocity $w$ evolves according to a two-dimensional forced advection-diffusion equation, \eqref{slow_dynamics_eq_b}, with buoyancy force given by $\left<\rho\right>_z$, the density average in the vertical direction. However, the evolution equation of the density $\rho$ in \eqref{slow_dynamics_eq_c} retains its three-dimensionality.

The authors in \cite{Wingate_Embid_Cerfon_Taylor_2011} performed forced numerical simulations of the rotating Boussinesq equations to demonstrate a support for the theory in the limit as $Ro \rightarrow 0$. They find the formulation and the presence of large-scale columnar Taylor-Proudman flows, as well, they show that the ratio of the ``slow" total energy to the total energy approaches to a constant; and that at very small Rossby numbers $Ro$ this constant approaches to the value 1.

We notice that when we take the $z$-average of \eqref{slow_dynamics_eq_c} we conclude that $\left<\rho\right>_z$ satisfies the evolution equation:
\begin{align}
\pd{\left<\rho\right>_z}{t} + (\bu_h\cdot\nabla_h) \left<\rho\right>_z = \frac{1}{Fr}w + \frac{1}{Re Pr} \Delta_h\left<\rho\right>_z.
\end{align}
We introduce here the inviscid version of system \eqref{slow_dynamics_eq}:
\begin{subequations}
\label{inviscid_slow_dynamics_eq}
\begin{align}
&\pd{\bu_h}{t} + (\bu_h\cdot\nabla_h)\bu_h +\nabla_hp = 0,   \label{inviscid_slow_dynamics_eq_a}\\
&\pd{w}{t} + (\bu_h\cdot\nabla_h)w = -\frac{1}{Fr} \left<\rho\right>_z; \quad \left<\rho\right>_z(t;\bx_h) := \frac{1}{L}\int_0^L \rho(t;\bx)\,dz,\label{inviscid_slow_dynamics_eq_b}\\
&\pd{\left<\rho\right>_z}{t} + (\bu_h\cdot\nabla_h)\left<\rho\right>_z = \frac{1}{Fr}w , \label{viscous_slow_dynamics_eq_rho_z}\\
&\pd{\rho}{t}+ (\bu_h\cdot\nabla_h)\rho + w \pd{\rho}{z}= \frac{1}{Fr}w, \label{inviscid_slow_dynamics_eq_c}\\
&\nabla_h\cdot \bu_h = 0, \quad \pd{w}{z}  =0, \label{div_free_slow_dynamics_eq_vorticity}\\
&\bu_h(0;\bx_h) = \bu_h^0(\bx_h), \quad w(0;\bx_h) = w^0(\bx_h), \quad \rho(0,\bx)= \rho^0(\bx).
\end{align}
\end{subequations}

Denote by $\nT^d$ the $L$-period box $[0,L]^d$. In this work we will establish the global well-posedness of strong solutions, for smooth enough initial data, and the global existence and uniqueness of weak solutions, for less regular initial data, for the inviscid system \eqref{inviscid_slow_dynamics_eq} in the three-dimensional torus $\nT^3$, i.e. subject to periodic boundary conditions. 
This paper is organized as follows. In section \ref{Pre}, we recall the global well-posedness result of solutions for the two-dimensional incompressible Euler equations and the global existence and uniqueness result for linear transport equations. In section \ref{inviscid_well-posedness}, we prove a global existence and uniqueness of weak solutions result of \eqref{inviscid_slow_dynamics_eq}. Moreover, we prove a well-posedness (continuous dependence on initial data) result of strong solutions of system \eqref{inviscid_slow_dynamics_eq}.  In section \ref{inviscid_slow_dynamics_eq_vorticity} we introduce a presentation of system \eqref{inviscid_slow_dynamics_eq} in vorticity formulation and prove a global existence and uniqueness of weak solutions (without continuous dependence on initial data) result for the system in this presentation.

\bigskip
\section{Preliminaries}\label{Pre}
In this section, we introduce some preliminary material and notations which are commonly used in the mathematical study of fluids, in particular in the study of the Navier-Stokes equations (NSE) and the Euler equations.

Let $\mathcal{F}_h$, $\mathcal{F}$ be the set of all trigonometric polynomials of zero-average with periodic domain $\nT^2$ and $\nT^3$, respectively.  We define the spaces of smooth functions which incorporates the divergence-free and zero-average condition to be:
\[\mathcal{V}_h:=\set{\vphi\in\mathcal{F}_h:\nabla_h\cdot\vphi=0 \text{ and} \int_{\nT^2}\vphi\;dx=0},\]
\[\mathcal{V}:=\set{\phi\in\mathcal{F}:\nabla\cdot\phi=0 \text{ and} \int_{\nT^3}\phi\;dx=0}.\]
We denote by $\LphP{p}$, $\WhP{s}{p}$, ${H}_h^s(\nT^3)\equiv \WhP{s}{2}$ to be the closures of $\mathcal{V}_h$ in the usual Lebesgue and Sobolev spaces. Similarly, we denote by $\LpP{p}$, $\WP{s}{p}$, ${H}^s(\nT^3)\equiv \WP{s}{2}$ to be the closures of $\mathcal{V}$ in the usual Lebesgue and Sobolev spaces, respectively.

Since we restrict ourselves to finding solutions over the three-dimensional $L$-periodic box $\nT^3$, therefore, if we assume that $ \int_{\nT^2} \bu_h^0 \, d\bx_h$ $={\mathbf 0}$, $\int_{\nT^2} w^0 \, d\bx_h$ $=$ $\int_{\nT^3}\rho^0\, d\bx$ $=0$, then integrating system \eqref{inviscid_slow_dynamics_eq} implies that
\begin{align*}
\int_{\nT^2} \bu_h(t;\bx_h)\, d\bx_h &= {\mathbf 0},\\
\od{}{t}\int_{\nT^2}w(t;\bx_h)\,d\bx_h &= -\frac{1}{Fr}\int_{\nT^2} \left<\rho\right>_z(t;\bx_h)\, d\bx_h, \\
\od{}{t}\int_{\nT^2} \left<\rho\right>_z(t;\bx_h)\, d\bx_h &= \frac{1}{Fr}\int_{\nT^2}w(t;\bx_h)\,d\bx_h,\\
\od{}{t}\int_{\nT^3}\rho(t;\bx)\,d\bx &= \frac{L}{Fr}\int_{\nT^2} w(t;\bx_h)\,d\bx_h,
\end{align*}
for any $t>0$. This yield that
\begin{align*}
\od{}{t}\left(\int_{\nT^2}w(t;\bx_h)\,d\bx_h + \int_{\nT^2} \left<\rho\right>_z(t;\bx_h)\, d\bx_h\right) &= \frac{1}{Fr}\left(\int_{\nT^2}w(t;\bx_h)\,d\bx_h - \int_{\nT^2} \left<\rho\right>_z(t;\bx_h)\, d\bx_h\right), \\
\od{}{t}\left(\int_{\nT^2}w(t;\bx_h)\,d\bx_h - \int_{\nT^2} \left<\rho\right>_z(t;\bx_h)\, d\bx_h\right) &= -\frac{1}{Fr}\left(\int_{\nT^2}w(t;\bx_h)\,d\bx_h + \int_{\nT^2} \left<\rho\right>_z(t;\bx_h)\, d\bx_h\right),
\end{align*}
for any $t>0$, and so $\int_{\nT^2}w(t;\bx_h)\,d\bx_h + \int_{\nT^2} \left<\rho\right>_z(t;\bx_h)\, d\bx_h$ $=$ $\int_{\nT^2}w(t,\bx_h)\,d\bx_h -\int_{\nT^2} \left<\rho\right>_z(t;\bx_h)\, d\bx_h$ $= 0$, for any $t>0$.
This implies that $ \int_{\nT^2}w(t;\bx_h)\,d\bx_h$ $=$ $ \int_{\nT^2} \left<\rho\right>_z(t;\bx_h)\, d\bx_h$ $=0$, for any $t\geq0$. This yields that
\begin{align*}
\od{}{t} \int_{\nT^3} \rho(t;\bx)\, d\bx =0,
\end{align*}
for any $t>0$. Thus, we can work in the spaces defined above consistently.

We define the inner products on $\LphP{2}$ and $\HhP{1}$, respectively, by
\[(\bu,\bv)_h=\sum_{i=1}^2\int_{\nT^2} u^iv^i\,d\bx_h
\sand
((\bu,\bv))_h=\sum_{i,j=1}^2\int_{\nT^2}\partial_ju^i\partial_jv^i\,d\bx_h,
\]
and the associated norms $\norm{\bu}_{\LphP{2}}=(\bu,\bu)_h^{1/2}$ and $\norm{\bu}_{\HhP{1}}=((\bu,\bu))_h^{1/2}$. Similarly,  we define the inner products on $\LpP{2}$ and $\HP{1}$ respectively by
\[(\bu,\bv)=\sum_{i=1}^3\int_{\nT^3} u^iv^i\,d\bx
\sand
((\bu,\bv))=\sum_{i,j=1}^3\int_{\nT^3}\partial_ju^i\partial_jv^i\,d\bx,
\]
and the associated norms $\norm{\bu}_{\LpP{2}}=(\bu,\bu)^{1/2}$ and $\norm{\bu}_{\HP{1}}=((\bu,\bu))^{1/2}$. (We use these notations indiscriminately for both scalars and vectors, which should not be a source of confusion). Note that $((\cdot,\cdot))_h$ and $((\cdot,\cdot))$ are norms due to the Poincar\'e inequality, Lemma \ref{Poincare}, below.

Let $Y$ be a Banach space.  We denote by $L^p([0,T];Y)$ the space of (Bochner) measurable functions $t\mapsto w(t)$, where $w(t)\in Y$, for a.e. $t\in[0,T]$, such that the integral $\int_0^T\|w(t)\|_Y^p\,dt$ is finite.

\begin{remark}
In this paper, $C$ represents a dimensionless constant that may change from line to line.
\end{remark}
We recall the well-known two-dimensional elliptic estimate, due to the Biot-Savart law, for $\nabla_h\cdot \bu_h=0$ and $\nabla_h\times\bu_h = \omega$,
\begin{equation}\label{elliptic_regularity_Yudovich}
\norm{\bu_h}_{W^{1,p}_h(\nT^2)} \leq C p \|\omega\|_{L^p_h(\nT^2)},
\end{equation}
for every $p\in [2,\infty)$ (see, e.g., \cite{Yudovich_1963} and references therein), where $C$ is a dimensionless constant, which is independent of $p$.


Furthermore, we have the Poincar\'e inequality:
\begin{lemma}\cite{Constantin_Foias_1988}\label{Poincare}
For all $\vphi\in \HhP{1}$ and $\phi \in \HP{1}$, we have
\begin{align}\label{poincare_h}
\|\vphi\|_{\LphP{2}}\leq C L\|\nabla_h\vphi\|_{\LphP{2}},
\end{align}
and
\begin{align}\label{poincare}
\|\phi\|_{\LpP{2}}\leq CL  \|\nabla\vphi\|_{\LpP{2}}.
\end{align}
\end{lemma}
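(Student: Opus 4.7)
The plan is to prove both inequalities by expanding the functions in Fourier series, exploiting the fact that the zero-average condition forces the $k=0$ Fourier coefficient to vanish, and then using Parseval's identity.

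First I would treat the three-dimensional case \eqref{poincare}. For $\phi\in H^1(\nT^3)$ with $\int_{\nT^3}\phi\,d\bx=0$, write the Fourier expansion
\begin{align*}
\phi(\bx)=\sum_{\bk\in\nZ^3\setminus\{0\}}\widehat{\phi}_{\bk}\,e^{2\pi i\bk\cdot\bx/L},
\end{align*}
where the sum omits the zero mode precisely because of the zero-average assumption. Parseval's identity gives
\begin{align*}
\|\phi\|_{\LpP{2}}^2 = L^3\sum_{\bk\neq 0}|\widehat{\phi}_{\bk}|^2,\qquad \|\nabla\phi\|_{\LpP{2}}^2 = L^3\sum_{\bk\neq 0}\left(\frac{2\pi}{L}\right)^{2}|\bk|^2|\widehat{\phi}_{\bk}|^2.
\end{align*}
Since $|\bk|^2\geq 1$ for every nonzero $\bk\in\nZ^3$, one has $\|\nabla\phi\|_{\LpP{2}}^2\geq (2\pi/L)^2\|\phi\|_{\LpP{2}}^2$, which yields \eqref{poincare} with constant $C=1/(2\pi)$. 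The two-dimensional estimate \eqref{poincare_h} follows by exactly the same argument applied componentwise to each component of the vector field $\vphi\in\HhP{1}$, using the two-dimensional Fourier expansion and again discarding the zero mode.

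Strictly speaking, the above computation is rigorous for $\phi$ in the dense subspace $\mathcal{V}$ (trigonometric polynomials), and the inequality is then extended to all of $\HP{1}$ and $\HhP{1}$ by a standard density argument, since both sides of the inequality are continuous with respect to the $H^1$-norm. No obstacle is expected here: the entire proof is elementary Fourier analysis, and the only subtle point to verify is that the spaces $\HP{1}$ and $\HhP{1}$, as defined in the paper as closures of the zero-mean divergence-free trigonometric polynomials, automatically inherit the zero-average condition that is essential for killing the $\bk=0$ mode.
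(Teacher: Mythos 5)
Your proof is correct and follows the standard Fourier--Parseval argument; the paper offers no proof of its own here, simply citing Constantin--Foias, where essentially this same computation (zero mode killed by the mean-zero condition, $|\bk|^2\ge 1$ for the remaining modes, then density) appears. The only cosmetic point is that the right-hand side of \eqref{poincare} as printed contains a typo ($\nabla\vphi$ should read $\nabla\phi$), which you have implicitly and correctly repaired.
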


Next, we recall the global existence and uniqueness theorem, due to Yudovich, \cite{Yudovich_1963}, for the incompressible two-dimensional Euler equations in vorticity formulations (see also \cite{Bardos_1972, Kato_1990, Majda_Bertozzi_2002}).

The two-dimensional Euler equations, for incompressible inviscid flows in the periodic box $\nT^2$ are
\begin{subequations}\label{2D_Euler}
\begin{align}
&\pd{\bu_h}{t} + (\bu_h\cdot\nabla_h) \bu_h + \nabla_h p = 0, \qquad  \text{in } [0,T]\times\nT^2\\
&\nabla_h\cdot \bu_h = 0, \qquad\qquad \qquad \qquad \qquad\; \text{in } [0,T]\times\nT^2 \end{align}\begin{align}
&\bu_h(0;\bx_h) = \bu_h^0(\bx_h), \qquad \qquad \qquad \;\;\; \text{in } \nT^2, 
\end{align}
\end{subequations}
where $T>0$ is given. Here, $\bu_h = \bu_h(t;x,y)$ is the velocity vector field, $p= p(t;x,y)$ is the pressure. The vorticity formulation, for the two-dimensional incompressible Euler equations is
\begin{subequations}\label{2D_Euler_vorticity}
\begin{align}
&\pd{\omega}{t} + (\bu_h\cdot\nabla_h) \omega  = 0,  \qquad \qquad \qquad \text{in } [0,T]\times\nT^2\\
&\nabla_h\cdot \bu_h = 0, \quad \omega = \nabla_h\times\bu_h, \qquad \;\;\;\text{in } [0,T]\times\nT^2 \\
&\omega(0;\bx_h) = \omega^0(\bx_h), \qquad \qquad \qquad \quad\; \text{in } \nT^2.
\end{align}
\end{subequations}
The velocity is determined from the vorticity by means of the two-dimensional periodic Biot-Savart law:
\begin{subequations}\label{conv}
\begin{align}
\bu_h(x,y) = K\convh\omega &:=\int_{\nT^2} K(x-s,y-\xi)\,\omega(s,\xi)\,dsd\xi,\\
K(x,y) &= \nabla_h^\perp G(x,y),
\end{align}
\end{subequations}
where $G(x,y)$ is the fundamental solution of the Poisson equation in two-dimensions subject to periodic boundary conditions, the binary operation $\convh$ denotes the horizontal convolution, and $\nabla_h^\perp = (-\pd{}{y}, \pd{}{x})$.

The questions of global well-posedness and the blowup of smooth solutions of the three-dimensional Euler equations has been studied by many authors. The {\it Beale--Kato--Majda} criterion \cite{Beale_Kato_Majda_1984} states that the quantity
\begin{align*}
\int_0^T \norm{\omega(t)}_{L^\infty} \, dt
\end{align*}
controls the blowup; that is if it is finite then the solution of the Euler equations remains as smooth as the initial data, for initial data $\omega_0\in H^s$, for $s>1$ in 2D and $s>3/2$ in 3D, on the time interval $[0,T]$, otherwise there is a finite blowup. For initial data $\bu_0\in H^s$, for $s>5/2$, the three-dimensional Euler equations posses a unique local in time solution $\bu(t;\bx)$ in the same space $H^s$ (cf. \cite{Beale_Kato_Majda_1984}, \cite{Majda_Bertozzi_2002}). The same result is valid for initial data $\bu_0 \in C^{1,\alpha}$ for $\alpha \in (0,1]$ \cite{Lichtenstein_1925}. The loss of smoothness of weak solutions for the three-dimensional Euler equations with initial data $\bu_0 \in C^{0,\alpha}$, with $\alpha \in (0,1)$ is shown in \cite{Bardos_Titi_2010}. In other words, the space $C^1$ is the critical space for the short time well-posedness of the three-dimensional Euler equations; that is for initial data more regular than $C^1$, one has the well-posedness of the three-dimensional Euler equations and for less regular initial data one has the ill-posedness. For recents surveys concurning the three-dimensional Euler equations see \cite{Bardos_Titi_2007}, \cite{Bardos_Titi_2013} and \cite{Constantin_2007}.
The situation is different for the two-dimensional Euler equations due to the work of Yudovich \cite{Yudovich_1963}.

\begin{theorem}\cite{Yudovich_1963}\label{Yudovich_2D_Euler}
Let $\omega^0 \in \LphP{\infty}$, then system \eqref{2D_Euler_vorticity} has a unique weak solution (i.e. solution in the distribution sense) $\omega \in\LtLphP{\infty}{\infty}$ corresponding to the initial data $\omega^0$ such that $\norm{\omega}_{\LtLphP{\infty}{\infty}} = \norm{\omega^0}_{\LphP{\infty}}$.
\end{theorem}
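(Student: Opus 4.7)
The plan is to construct a solution by mollifying the initial datum and passing to a weak limit, with the $L^\infty$ bound coming from the transport structure of the vorticity equation, and then to establish uniqueness via the quantitative Biot--Savart estimate \eqref{elliptic_regularity_Yudovich}, whose factor $Cp$ is precisely what makes the Yudovich class $\LphP{\infty}$ tractable.

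For existence, I would pick smooth approximations $\omega_0^\varepsilon \to \omega^0$ (a.e.) with $\|\omega_0^\varepsilon\|_{\LphP{\infty}} \leq \|\omega^0\|_{\LphP{\infty}}$ and solve the classical smooth 2D Euler system to obtain global regular solutions $\omega^\varepsilon$. Because $\omega^\varepsilon$ is transported by the divergence-free velocity $\bu_h^\varepsilon$, every $L^p$ norm is conserved; in particular the $L^\infty$ bound is uniform in $\varepsilon$ and $t$, and \eqref{elliptic_regularity_Yudovich} then gives a uniform $L^\infty([0,T]; W^{1,p}_h(\nT^2))$ bound for $\bu_h^\varepsilon$, for each finite $p$. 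Extracting a subsequence, $\omega^\varepsilon \weakstar \omega$ in $L^\infty([0,T]; \LphP{\infty})$; the equation furnishes an $H^{-s}_h$ bound on $\partial_t \omega^\varepsilon$, so Aubin--Lions compactness yields strong convergence of $\bu_h^\varepsilon$ in $L^2([0,T]; \LphP{2})$. This strong--weak pairing is what is needed to pass to the limit in the nonlinear flux $\bu_h^\varepsilon \omega^\varepsilon$ in the distributional form of the equation, producing a weak solution $\omega$ with $\|\omega\|_{\LtLphP{\infty}{\infty}} \leq \|\omega^0\|_{\LphP{\infty}}$ by weak-star lower semicontinuity; the reverse inequality follows because the limit equation is still a transport equation with a divergence-free (in fact log-Lipschitz) drift, so $L^\infty$ is preserved along the Lagrangian flow.

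The heart of the proof, and the place where I expect the main difficulty, is uniqueness. Suppose $\omega_1,\omega_2$ are two weak solutions with the same initial datum, and set $\delta\bu = \bu_{h,1} - \bu_{h,2}$. Subtracting the two velocity equations gives $\partial_t \delta\bu + (\bu_{h,1}\cdot\nabla_h)\delta\bu + (\delta\bu\cdot\nabla_h)\bu_{h,2} + \nabla_h \delta p = 0$; testing against $\delta\bu$ and using $\nabla_h\cdot\bu_{h,1}=0$ eliminates the first nonlinear term, yielding, for $f(t) := \tfrac12 \|\delta\bu(t)\|_{\LphP{2}}^2$ and any $p \in [2,\infty)$,
\begin{equation*}
f'(t) \leq \|\delta\bu\|_{L^{2p/(p-1)}_h(\nT^2)}^{2}\,\|\nabla_h \bu_{h,2}\|_{L^p_h(\nT^2)}.
\end{equation*}
Interpolating $\|\delta\bu\|_{L^{2p/(p-1)}_h(\nT^2)}^2 \leq \|\delta\bu\|_{\LphP{2}}^{2(p-1)/p}\,\|\delta\bu\|_{\LphP{\infty}}^{2/p}$, invoking \eqref{elliptic_regularity_Yudovich} in the form $\|\nabla_h \bu_{h,2}\|_{L^p_h(\nT^2)} \leq C\,p\,\|\omega^0\|_{\LphP{\infty}}$, and absorbing the uniform bound $\|\delta\bu\|_{\LphP{\infty}} \leq M$ into a constant, one arrives at the key inequality $f'(t) \leq C_0\,p\,M^{2/p}\,f(t)^{(p-1)/p}$. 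Integrating gives $f(t)^{1/p} \leq C_0\,M^{2/p}\,t$, and letting $p \to \infty$ on a short time interval $[0,\tau]$ with $C_0\tau < 1$ forces $f \equiv 0$; iteration covers $[0,T]$. The subtle point is that $\omega_j$ lies only in $\LphP{\infty}$, so the manipulations above must be carried out at the velocity level (where the kinetic energy identity is available); renormalization in the sense of DiPerna--Lions for the underlying transport equation is the standard rigorous device underpinning these computations.
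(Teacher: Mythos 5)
This statement is quoted verbatim from Yudovich's 1963 paper and the present paper offers no proof of it (it is recalled as background, alongside the Kato and DiPerna--Lions theorems), so there is no internal argument to compare yours against. Your sketch is the standard and correct proof of the Yudovich theorem: mollification plus conservation of all $L^p$ norms of the transported vorticity for existence, and the energy method at the velocity level combined with the Calder\'on--Zygmund growth $\|\nabla_h \bu_h\|_{L^p_h(\nT^2)} \leq Cp\,\|\omega\|_{L^p_h(\nT^2)}$ and the Osgood-type iteration in $p$ for uniqueness. The only loose spots are presentational rather than substantive: the Aubin--Lions step is more naturally phrased as compactness of $\bu_h^\varepsilon$ (bounded in $W^{1,p}_h$ with $\partial_t \bu_h^\varepsilon$ bounded in a negative Sobolev space) rather than of $\omega^\varepsilon$, and the justification of the energy identity for the difference of two weak solutions, which you correctly flag, is where the genuine technical work of a fully rigorous write-up lies.
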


\begin{theorem}\label{Kato_theorem}\cite{Kato_1986, Kato_Ponce_1986}
Let $\bu^0\in W^{s,q}_h(\nR^2)$, with $s>1+\frac{2}{q}$, $1<q<\infty$. For any $T>0$, there exists a unique solution $\bu \in C([0,T]; W^{s,q}_h(\nR^2))\cap C^1([0,T];W^{s-1,q}_h(\nR^2))$ and $p \in C([0,T];W^{s,q}(\nR^2))$ for \eqref{2D_Euler} such that
\begin{align}\label{u_Hs_norm}
\norm{\bu(t)}_{W^{s,q}_h(\nR^2)} \leq K(t),
\end{align}
where $K(t)$ is a real-valued continuous function on $0\leq t<T$, depending on $s,q$ and $\norm{\bu^0}_{W^{s,q}_h(\nR^2)}$.
\end{theorem}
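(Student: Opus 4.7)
The plan is to combine local well-posedness in $W^{s,q}$, established via Kato--Ponce commutator estimates, with the two-dimensional vorticity structure that propagates $L^p$ bounds on $\omega$ globally in time. Approximate solutions $\bu^\varepsilon$ can be constructed either by mollifying the initial data together with the nonlinearity, or by adding a small artificial viscosity $\varepsilon \Delta \bu$ and using the corresponding Navier--Stokes theory. On these smooth approximations one derives the basic a priori estimate by applying $D^\alpha$ with $|\alpha|\le s$ to \eqref{2D_Euler}, pairing with $|D^\alpha \bu^\varepsilon|^{q-2}D^\alpha \bu^\varepsilon$, integrating over $\nR^2$, and exploiting incompressibility to kill the ``non-commutator'' contribution of $(\bu^\varepsilon\cdot\nabla_h)D^\alpha \bu^\varepsilon$. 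The remaining commutator $[D^\alpha,\bu^\varepsilon\cdot\nabla_h]\bu^\varepsilon$ is controlled in $L^q$ by the Kato--Ponce inequality, giving
\begin{equation*}
\frac{d}{dt}\|\bu^\varepsilon(t)\|_{W^{s,q}_h(\nR^2)} \le C\,\|\nabla_h \bu^\varepsilon(t)\|_{L^\infty_h(\nR^2)}\,\|\bu^\varepsilon(t)\|_{W^{s,q}_h(\nR^2)}.
\end{equation*}
Since $s>1+2/q$ implies $W^{s,q}_h(\nR^2)\hookrightarrow C^1$, this alone yields a local-in-time bound independent of $\varepsilon$, and hence local existence after passing to the limit via Aubin--Lions compactness.

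To upgrade local to global, I would use the two-dimensional vorticity formulation \eqref{2D_Euler_vorticity}, which is a pure transport equation along the divergence-free flow of $\bu^\varepsilon$. This gives $\|\omega^\varepsilon(t)\|_{L^p_h(\nR^2)}=\|\omega^0\|_{L^p_h(\nR^2)}$ for all $p\in[1,\infty]$ (in the inviscid limit; for the viscous approximation one uses the maximum principle, which gives the same bound uniformly in $\varepsilon$). Next I would replace the crude control $\|\nabla_h\bu^\varepsilon\|_{L^\infty}\le C\|\bu^\varepsilon\|_{W^{s,q}}$ by the logarithmic Beale--Kato--Majda--type inequality
\begin{equation*}
\|\nabla_h\bu^\varepsilon\|_{L^\infty_h(\nR^2)}\le C\Bigl(1+\|\omega^\varepsilon\|_{L^2_h(\nR^2)}+\|\omega^\varepsilon\|_{L^\infty_h(\nR^2)}\log\bigl(1+\|\bu^\varepsilon\|_{W^{s,q}_h(\nR^2)}\bigr)\Bigr),
\end{equation*}
which follows from the Biot--Savart elliptic bound \eqref{elliptic_regularity_Yudovich} together with the Sobolev embedding $W^{s,q}\hookrightarrow C^1$. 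Substituting into the a priori inequality and using Grönwall yields a double-exponential but finite bound on $\|\bu^\varepsilon(t)\|_{W^{s,q}_h(\nR^2)}$ on any $[0,T]$, uniform in $\varepsilon$. Passing to the limit and using the equation to transfer regularity in time gives $\bu\in C([0,T];W^{s,q}_h(\nR^2))\cap C^1([0,T];W^{s-1,q}_h(\nR^2))$, and the pressure is recovered from $-\Delta p=\nabla_h\cdot(\bu_h\cdot\nabla_h)\bu_h$ together with Calderón--Zygmund theory on $\nR^2$.

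For uniqueness, given two solutions $\bu_1,\bu_2$ with the same initial data, I would write the equation for $\bv=\bu_1-\bu_2$, test against $\bv$ in $L^2_h(\nR^2)$, and estimate the transport term via $\|\nabla_h \bu_2\|_{L^\infty}$ which is finite thanks to the bound just obtained; this yields $\frac{d}{dt}\|\bv\|_{L^2}^2 \le C(t)\|\bv\|_{L^2}^2$ and hence $\bv\equiv 0$ by Grönwall. The main obstacle here is the second step: without the logarithmic refinement of the $\|\nabla_h\bu\|_{L^\infty}$ bound, one only gets local-in-time existence, so the essential two-dimensional input is the conservation of $\|\omega\|_{L^\infty}$ coupled with the log-Biot--Savart estimate; everything else is standard high-regularity energy analysis.
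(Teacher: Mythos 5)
The paper does not prove Theorem \ref{Kato_theorem}; it is quoted as a black box from \cite{Kato_1986, Kato_Ponce_1986}, so there is no internal proof to compare against. Your outline is, in substance, the classical argument from those references (and from Majda--Bertozzi): local existence in $W^{s,q}$ by energy estimates closed with the Kato--Ponce commutator inequality and the embedding $W^{s,q}(\nR^2)\hookrightarrow C^1$ for $s>1+2/q$, global continuation from conservation of $\|\omega(t)\|_{L^p}$ under the 2D vorticity transport, and $L^2$ uniqueness via Gr\"onwall. The skeleton is correct. Two points would need more care in a full write-up. First, the logarithmic inequality for $\|\nabla_h\bu\|_{L^\infty}$ is where the real work lives, and it does not follow ``from \eqref{elliptic_regularity_Yudovich} together with the Sobolev embedding'' as stated: one must either optimize the $p$-dependence of \eqref{elliptic_regularity_Yudovich} against a Gagliardo--Nirenberg interpolation with the $W^{s-1,q}$ norm, or carry out the near-field/far-field splitting of the Biot--Savart kernel as in Beale--Kato--Majda; alternatively, in 2D one can bypass the explicit log estimate entirely by invoking the BKM continuation criterion, since $\int_0^T\|\omega(t)\|_{L^\infty}\,dt = T\|\omega^0\|_{L^\infty}$ is finite for free. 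Second, the $L^q$ energy pairing with $|D^\alpha\bu^\varepsilon|^{q-2}D^\alpha\bu^\varepsilon$ and the sign of the artificial-viscosity term require a regularization of the weight when $1<q<2$; this is routine but should be acknowledged for the full range $1<q<\infty$ claimed. Neither point is a conceptual gap.
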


Lastly, we recall the following existence and uniqueness theorems for linear transport equations.
\begin{theorem}\cite{DiPerna_Lions_1989}\label{DiPerna_Lions_existence_1989}
Let $p\in[1,\infty]$ and $u^0\in L^p(\nR^n)$. Assume that
\begin{align*}
b \in L^1([0,T];L^1_{loc}(\nR^n)), &\qquad c \in L^1([0,T];L^1_{loc}(\nR^n)),\\
c + \nabla\cdot b \in L^1([0,T];L^q_{loc}(\nR^n)), &\qquad b \in L^1([0,T];L^q_{loc}(\nR^n)),
\end{align*}
where $\frac1p + \frac1q = 1$, and

\begin{align*}
c + \frac1p \nabla\cdot b \in L^1([0,T];L^\infty(\nR^n)), &\qquad \text{ if } p>1, \\
c, \nabla\cdot b \in L^1([0,T];L^\infty(\nR^n)), & \qquad \text{ if } p=1.
\end{align*}
If $f\in L^1([0,T];L^p(\nR^n))$, then there exists a unique weak solution $u \in L^\infty([0,T];L^p(\nR^n))$ of
\begin{align}
\pd{u}{t} + (b\cdot\nabla)u + cu = f,
\end{align}
corresponding to the initial condition $u^0$.
\end{theorem}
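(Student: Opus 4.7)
The plan is to follow the classical DiPerna-Lions strategy: derive an a priori $L^p$ estimate for smooth solutions, construct weak solutions by mollification and weak-$*$ compactness, and obtain uniqueness via a renormalization argument based on a commutator lemma.

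For the a priori estimate with $1 < p < \infty$, I would multiply the equation by $|u|^{p-2}u$ and integrate over $\mathbb{R}^n$. Using $(b\cdot\nabla u)\,|u|^{p-2}u = \tfrac{1}{p}(b\cdot\nabla)|u|^p$ and integrating the transport term by parts gives
\[
\frac{1}{p}\frac{d}{dt}\|u(t)\|_{L^p}^p + \int_{\mathbb{R}^n}\left(c + \tfrac{1}{p}\nabla\cdot b\right)|u|^p\,dx = \int_{\mathbb{R}^n} f\,|u|^{p-2}u\,dx,
\]
which is precisely the combination controlled in $L^1_tL^\infty_x$ by hypothesis. Hölder on the right and Grönwall yield
\[
\|u(t)\|_{L^p} \leq \exp\!\left(\int_0^t \left\|c+\tfrac{1}{p}\nabla\cdot b\right\|_{L^\infty}\!ds\right) \left(\|u^0\|_{L^p} + \int_0^t \|f(s)\|_{L^p}\,ds\right).
\]
The endpoint $p=\infty$ is handled by the maximum principle along characteristics, while $p=1$ is obtained by duality, which explains the need for the stronger individual $L^\infty$ bounds on $c$ and $\nabla\cdot b$ in that case.

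For existence I would mollify $b, c, f$ and $u^0$ (with a spatial cutoff to accommodate the merely local integrability) to obtain smooth coefficients, then solve the regularized transport equation classically by the method of characteristics to produce $u_\varepsilon$. The a priori bound is uniform in $\varepsilon$, so Banach-Alaoglu extracts a weak-$*$ limit $u \in L^\infty([0,T];L^p(\mathbb{R}^n))$. To pass to the limit in the distributional formulation, the assumption $b \in L^1([0,T];L^q_{loc})$ with $\tfrac{1}{p}+\tfrac{1}{q}=1$ supplies strong $L^1_t L^q_{loc,x}$ convergence of $b_\varepsilon$, so that products with the weakly-$*$ converging $u_\varepsilon$ converge against compactly supported test functions by a standard weak-times-strong argument; the hypothesis $c + \nabla\cdot b \in L^1_tL^q_{loc,x}$ plays the analogous role for the zeroth-order term after a divergence is moved onto the test function.

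The main obstacle is uniqueness. By linearity one reduces to showing that $u^0 = 0$ and $f = 0$ imply $u \equiv 0$, but a generic $L^\infty L^p$ weak solution is too rough for the energy identity of the first step to apply directly: the chain rule $\partial_t|u|^p = p|u|^{p-2}u\,\partial_t u$ is not a priori admissible at that regularity, nor can one freely move $b$ past derivatives. The remedy is the DiPerna-Lions renormalization: mollify $u$ by $u_\varepsilon = u \ast \rho_\varepsilon$ to obtain
\[
\partial_t u_\varepsilon + (b\cdot\nabla)u_\varepsilon + cu_\varepsilon = r_\varepsilon,
\]
with commutator remainder $r_\varepsilon := b\cdot\nabla u_\varepsilon - (b\cdot\nabla u)\ast\rho_\varepsilon + cu_\varepsilon - (cu)\ast\rho_\varepsilon$. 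The critical step, and the hardest part of the whole proof, is the commutator lemma asserting $r_\varepsilon \to 0$ in $L^1_{loc}((0,T)\times\mathbb{R}^n)$, proved by expanding the convolution integrand and exploiting the local integrability hypotheses on $b$, $c$ and $c+\nabla\cdot b$. With $u_\varepsilon$ smooth, the energy identity applies rigorously, and sending $\varepsilon\to 0$ yields $u \equiv 0$.
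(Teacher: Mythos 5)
This statement is Theorem \ref{DiPerna_Lions_existence_1989}, which the paper quotes from DiPerna--Lions \cite{DiPerna_Lions_1989} as a preliminary tool and does not prove, so there is no in-paper proof to compare against; your proposal is a reconstruction of the original argument. The a priori estimate and the existence part of your outline are sound and well matched to the hypotheses: multiplying by $|u|^{p-2}u$ does isolate exactly the combination $c+\tfrac1p\nabla\cdot b$ that the theorem controls in $L^1([0,T];L^\infty(\nR^n))$, and the mollification plus weak-$*$ compactness scheme, with the $L^q_{loc}$ hypotheses on $b$ and $c+\nabla\cdot b$ used to pass to the limit in the products against test functions, is precisely how DiPerna and Lions obtain existence.

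The genuine gap is in your uniqueness step. The commutator lemma you invoke --- that $r_\veps = (b\cdot\nabla)(u\ast\rho_\veps)-((b\cdot\nabla)u)\ast\rho_\veps + \dots \to 0$ in $L^1_{loc}$ --- is \emph{not} a consequence of the hypotheses of this theorem. Its proof requires one full derivative of $b$ in some local Lebesgue space, i.e.\ $b\in L^1([0,T];W^{1,\alpha}_{loc}(\nR^n))$ with $\tfrac1\alpha+\tfrac1p\le 1$: after writing the commutator as an integral of difference quotients of $b$ against $\rho_\veps$, one needs $\nabla b$, not merely $\nabla\cdot b$, to control it. The statement you are proving only assumes integrability of $b$ and of $\nabla\cdot b$, so your renormalization argument cannot be run under these hypotheses alone; this is exactly why the paper records uniqueness separately as Theorem \ref{DiPerna_Lions_uniqueness_1989}, whose hypotheses do include $b\in L^1([0,T];W^{1,q}_{loc}(\nR^n))$ together with the growth condition on $b/(1+|x|)$ (which you also omit, and which is needed to justify the global integration in the limiting energy identity). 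To close the argument as the paper uses it, you should either import the Sobolev hypothesis on $b$ from Theorem \ref{DiPerna_Lions_uniqueness_1989}, or prove uniqueness by a duality argument (solving the adjoint transport equation in the dual exponent), which is the route available under the existence-level hypotheses. Two smaller points: for non-Lipschitz $b$ the ``maximum principle along characteristics'' at $p=\infty$ is not directly available (characteristics need not exist); one should instead pass to the limit $p\to\infty$ in the $L^p$ bounds, which is consistent with the hypothesis being stated for $c+\tfrac1p\nabla\cdot b$ uniformly in $p$.
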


\begin{theorem}\cite{DiPerna_Lions_1989}\label{DiPerna_Lions_uniqueness_1989}
Let $u\in L^\infty([0,T];L^p(\nR^n))$, where $p\in[1,\infty]$,  be a solution of
\begin{align*}
\pd{u}{t} + (b\cdot\nabla) u + cu=0, \qquad u(0;x)=0.
\end{align*}
Assume that $c$, $\nabla\cdot b$ $\in $ $L^1([0,T];L^\infty(\nR^n))$, $b$ $\in$ $L^1([0,T];W^{1,q}_{loc}(\nR^n))$ where $\frac1p + \frac1q=1$ and
\begin{align*}
\frac{b}{1+\abs{x}} \in L^1([0,T];L^1(\nR^n)) + L^1([0,T];L^\infty(\nR^n)).
\end{align*}
Then $u\equiv0$.
\end{theorem}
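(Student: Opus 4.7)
The plan is to apply the DiPerna--Lions renormalization method. The difficulty is that the assumed regularity $u\in L^\infty([0,T];L^p(\nR^n))$ is insufficient to multiply the equation by $|u|^{p-2}u$ and integrate by parts directly; I would instead mollify $u$ in the spatial variable, control the resulting commutator error, and then pass to the limit inside a chain-rule identity.

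First, let $\rho_\epsilon(x)$ be a standard spatial mollifier and set $u_\epsilon := u\ast_x\rho_\epsilon$. Convolving the equation in $x$ yields
\begin{align*}
\partial_t u_\epsilon + (b\cdot\nabla)u_\epsilon + c u_\epsilon = r_\epsilon + s_\epsilon,
\end{align*}
with commutator errors $r_\epsilon := (b\cdot\nabla)u_\epsilon - \bigl((b\cdot\nabla)u\bigr)\ast_x\rho_\epsilon$ and $s_\epsilon := c u_\epsilon - (cu)\ast_x\rho_\epsilon$. The main obstacle is the DiPerna--Lions commutator lemma: one must show $r_\epsilon\to 0$ in $L^1([0,T];L^1_{loc}(\nR^n))$ under the hypothesis $b\in L^1([0,T];W^{1,q}_{loc})$ with $1/p+1/q=1$. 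I would rewrite $r_\epsilon(t,x)$ as an integral involving finite differences $b(t,x-\epsilon y)-b(t,x)$ against $\nabla\rho(y)$, use the identity $\int\rho(y)\,y_i\,\partial_i\phi(y)\,dy$ that cancels the leading singular contribution, and invoke $b\in W^{1,q}_{loc}$ together with H\"older's inequality (using $u\in L^p$) to obtain strong convergence. The simpler assertion $s_\epsilon\to 0$ follows from $c\in L^1_tL^\infty_x$ and standard mollification.

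Because $u_\epsilon$ is smooth in $x$, I can apply the chain rule: for any $\beta\in C^1(\nR)$ with bounded derivative,
\begin{align*}
\partial_t\beta(u_\epsilon) + (b\cdot\nabla)\beta(u_\epsilon) + c u_\epsilon\beta'(u_\epsilon) = \beta'(u_\epsilon)\bigl(r_\epsilon + s_\epsilon\bigr).
\end{align*}
Choosing $\beta$ to be a smooth approximation of $\xi\mapsto|\xi|^p$ when $p<\infty$ (and a bounded truncation when $p=\infty$), passing $\epsilon\to 0$ and invoking the commutator convergence yields the renormalized identity
\begin{align*}
\partial_t\beta(u) + (b\cdot\nabla)\beta(u) + c u\beta'(u) = 0
\end{align*}
in the distribution sense on $[0,T]\times\nR^n$.

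To close the argument I test the renormalized equation against a cutoff $\chi_R(x):=\chi(x/R)$ with $\chi\in C_c^\infty(\nR^n)$ equal to $1$ near the origin. Integration by parts produces a boundary term of the form $R^{-1}\!\int(\nabla\chi)(x/R)\cdot b\,\beta(u)\,dx$, and the growth hypothesis $b/(1+|x|)\in L^1_tL^1_x+L^1_tL^\infty_x$ is precisely what forces this term to zero as $R\to\infty$, using $\beta(u)\in L^\infty_tL^1_x$ from the $L^p$-bound on $u$. What remains, combined with $\nabla\cdot b,\,c\in L^1_tL^\infty_x$ and the bound $|u\beta'(u)|\le C|u|^p$, is a Gr\"onwall-type inequality $\tfrac{d}{dt}\|u(t)\|_{L^p}^p \le \phi(t)\|u(t)\|_{L^p}^p$ with $\phi\in L^1([0,T])$; since $u(0)\equiv 0$, Gr\"onwall's inequality forces $u\equiv 0$.
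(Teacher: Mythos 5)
The paper does not prove this theorem; it is quoted verbatim from DiPerna--Lions \cite{DiPerna_Lions_1989}, and your outline is a faithful reconstruction of the argument in that reference: spatial mollification, the commutator lemma for $b\in L^1_t W^{1,q}_{loc}$ paired with $u\in L^\infty_t L^p$, renormalization by $\beta$, the cutoff $\chi_R$ killed by the growth hypothesis on $b/(1+|x|)$, and Gr\"onwall. The approach is correct and is essentially the same as the one the paper defers to.
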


\bigskip

\section{Global Well-posedness of Strong Solutions}\label{inviscid_well-posedness}
In this section, we aim to prove the global well-posedness of strong solutions of the inviscid system \eqref{inviscid_slow_dynamics_eq}
subject to periodic boundary conditions over any fixed arbitrary time interval $[0,T]$. We give a definition of weak solutions of system \eqref{inviscid_slow_dynamics_eq} and prove the global existence and uniqueness of such solutions. Later, we give a definition of strong solutions of system \eqref{inviscid_slow_dynamics_eq} and prove the well-posedness of such solutions.

\begin{proposition}[{\it Apriori} Estimates]\label{nu_apriori_estimates}
Assume that $\bu_h$ $\in$ $C^1([0,T]; C^\infty(\nT^2))$, $w \in C^1([0,T]; C^\infty(\nT^2))$ and $\rho\in C^1([0,T]; C^\infty(\nT^3))$ are solutions of system \eqref{inviscid_slow_dynamics_eq}
on the time interval $[0,T]$, subject to periodic boundary conditions. Then the following estimates hold true:
\begin{align}
\stackrel[0\leq t\leq T]{}{\sup}{\left(\norm{w(t)}_{\LphP{2}}^2 + \norm{\left<\rho(t)\right>_z}_{\LphP{2}}^2\right)}  &= \norm{w^0}_{\LphP{2}}^2 + \norm{\left<\rho^0\right>_z}_{\LphP{2}}^2,  \label{nu_w_rho_z_L2_estimate}\\
\stackrel[0\leq t\leq T]{}{\sup} \norm{\rho(t)}_{\LpP{2}}^2 &\leq \norm{\rho^0}_{\LpP{2}} + K_0T, \label{nu_rho_L2_estimate}
\end{align}
where $K_0$ is a constant that depends on the norms of the initial data. Moreover,
\begin{align}
&\stackrel[0\leq t\leq T]{}{\sup}\left(\norm{w(t)}_{\LphP{\infty}} + \norm{\left<\rho(t)\right>_z}_{\LphP{\infty}}\right) \leq\left(\norm{w^0}_{\LphP{\infty}} + \norm{\left<\rho^0\right>_z}_{\LphP{\infty}}\right)e^{T/Fr}, \label{nu_w_rho_z_Linfty_estimate}
\end{align}
and
\begin{align}
&\stackrel[0\leq t\leq T]{}{\sup}\left(\norm{\nabla_hw(t)}_{\LphP{2}}^2 + \norm{\nabla_h\left<\rho(t)\right>_z}_{\LphP{2}}^2\right)\notag\\
&\qquad \quad \qquad \leq \left(\norm{\nabla_hw^0}_{\LphP{2}}^2 + \norm{\nabla_h\left<\rho^0\right>_z}_{\LphP{2}}^2\right)e^{\int_0^T2\norm{\nabla_h\bu_h(s)}_{\LphP{\infty}}\,ds}.\label{nu_w_rho_z_H1_estimate}
\end{align}
\end{proposition}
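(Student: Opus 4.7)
All four estimates are obtained by standard energy-type arguments that exploit two structural features of the inviscid system: the horizontal advection is divergence-free ($\nabla_h\cdot\bu_h=0$), so pure transport operators are skew-symmetric in $L^2(\nT^2)$, and the three-dimensional advection $(\bu_h\cdot\nabla_h)+w\partial_z$ is also divergence-free on $\nT^3$ because, in addition, $\partial_z w=0$. Moreover, the coupling between $w$ and $\langle\rho\rangle_z$ in \eqref{inviscid_slow_dynamics_eq_b}--\eqref{viscous_slow_dynamics_eq_rho_z} is skew-symmetric (the $\pm 1/Fr$ forcings cancel when the two equations are added after the appropriate multiplication), which gives exact conservation at the $L^2$ and $H^1$ levels up to commutator terms. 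Throughout the proof one tests the evolution equations against suitable multipliers, integrates by parts using periodicity, and applies Gr\"onwall's inequality.

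\textbf{$L^2$ estimates \eqref{nu_w_rho_z_L2_estimate} and \eqref{nu_rho_L2_estimate}.} Multiplying \eqref{inviscid_slow_dynamics_eq_b} by $w$ and \eqref{viscous_slow_dynamics_eq_rho_z} by $\langle\rho\rangle_z$ and integrating over $\nT^2$, the transport terms vanish by $\nabla_h\cdot\bu_h=0$; adding the two identities the forcing terms cancel because $-\tfrac{1}{Fr}\int w\langle\rho\rangle_z+\tfrac{1}{Fr}\int\langle\rho\rangle_z w=0$, proving \eqref{nu_w_rho_z_L2_estimate}. For \eqref{nu_rho_L2_estimate}, testing \eqref{inviscid_slow_dynamics_eq_c} against $\rho$ the full three-dimensional transport $(\bu_h\cdot\nabla_h)\rho+w\,\partial_z\rho$ integrates to zero, leaving
\begin{align*}
\tfrac{1}{2}\tfrac{d}{dt}\|\rho\|_{L^2(\nT^3)}^2=\tfrac{1}{Fr}\int_{\nT^3} w\rho\,d\bx=\tfrac{L}{Fr}\int_{\nT^2} w\langle\rho\rangle_z\,d\bx_h,
\end{align*}
using Fubini and the definition of $\langle\rho\rangle_z$. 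Cauchy--Schwarz and the already established conservation \eqref{nu_w_rho_z_L2_estimate} bound the right-hand side by a constant $K_0$ depending only on the initial data, and integration in time yields \eqref{nu_rho_L2_estimate}.

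\textbf{$L^\infty$ estimate \eqref{nu_w_rho_z_Linfty_estimate}.} Since \eqref{inviscid_slow_dynamics_eq_b} and \eqref{viscous_slow_dynamics_eq_rho_z} are transport equations driven by the smooth velocity $\bu_h$, I would evaluate them along the characteristic flow $X(t;\bx_h)$ generated by $\bu_h$. Along $X$, $w$ and $\langle\rho\rangle_z$ satisfy the $2\times 2$ linear ODE system with matrix $\tfrac{1}{Fr}\bigl(\begin{smallmatrix}0&-1\\1&0\end{smallmatrix}\bigr)$. Taking suprema over $\bx_h$, which is equivalent to suprema in $\bx_h$ since the flow is a measure-preserving bijection, one obtains
\begin{align*}
\|w(t)\|_{L^\infty}+\|\langle\rho(t)\rangle_z\|_{L^\infty}\leq \|w^0\|_{L^\infty}+\|\langle\rho^0\rangle_z\|_{L^\infty}+\tfrac{1}{Fr}\int_0^t\!\bigl(\|w(s)\|_{L^\infty}+\|\langle\rho(s)\rangle_z\|_{L^\infty}\bigr)ds,
\end{align*}
and Gr\"onwall's inequality yields the factor $e^{T/Fr}$.

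\textbf{$H^1$ estimate \eqref{nu_w_rho_z_H1_estimate}.} Here I would apply $\partial_j$ ($j=1,2$) to \eqref{inviscid_slow_dynamics_eq_b} and \eqref{viscous_slow_dynamics_eq_rho_z}, producing the commutator term $(\partial_j\bu_h\cdot\nabla_h)w$ and its analogue for $\langle\rho\rangle_z$. Testing against $\partial_j w$ and $\partial_j\langle\rho\rangle_z$ respectively, summing in $j$, and adding the two identities, the transport terms vanish by $\nabla_h\cdot\bu_h=0$ and the $1/Fr$ forcings cancel exactly as in the $L^2$ estimate. What remains is
\begin{align*}
\tfrac{1}{2}\tfrac{d}{dt}\bigl(\|\nabla_h w\|_{L^2}^2+\|\nabla_h\langle\rho\rangle_z\|_{L^2}^2\bigr)\leq \|\nabla_h\bu_h\|_{L^\infty}\bigl(\|\nabla_h w\|_{L^2}^2+\|\nabla_h\langle\rho\rangle_z\|_{L^2}^2\bigr),
\end{align*}
and Gr\"onwall's inequality gives \eqref{nu_w_rho_z_H1_estimate}. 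The main obstacle, or rather the only nontrivial point, is recognizing that the $1/Fr$-forcing terms cancel when the $\nabla_h w$ and $\nabla_h\langle\rho\rangle_z$ estimates are combined; otherwise one would be forced into a Gr\"onwall factor depending also on $T/Fr$, mixing two independent mechanisms. Everything else is a routine integration by parts made legitimate by the assumed $C^1([0,T];C^\infty)$ regularity of the solution.
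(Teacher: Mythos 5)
Your proposal is correct, and for three of the four estimates it follows essentially the same route as the paper: the $L^2$ bounds \eqref{nu_w_rho_z_L2_estimate} and \eqref{nu_rho_L2_estimate} are obtained by the identical energy argument (the paper applies Cauchy--Schwarz on $\nT^3$ and uses $\norm{w}_{\LpP{2}}=L^{1/2}\norm{w}_{\LphP{2}}$, which is the same computation as your Fubini reduction), and your $H^1$ argument --- differentiating, testing against $\partial_j w$ and $\partial_j\left<\rho\right>_z$, and summing --- is just the integrated-by-parts form of the paper's pairing with $-\Delta_h w$ and $-\Delta_h\left<\rho\right>_z$; the same commutator term $(\partial_j\bu_h\cdot\nabla_h)w$ and the same cancellation of the $\pm 1/Fr$ couplings appear in both. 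The one genuine divergence is the $L^\infty$ estimate \eqref{nu_w_rho_z_Linfty_estimate}: the paper multiplies by $w^{2k-1}$ and $\left(\left<\rho\right>_z\right)^{2k-1}$, derives $\od{}{t}\norm{w}_{\LphP{2k}}\leq \frac{1}{Fr}\norm{\left<\rho\right>_z}_{\LphP{2k}}$ and its counterpart, adds, integrates, and lets $k\to\infty$, whereas you integrate along the characteristics of $\bu_h$ and apply Gr\"onwall to the resulting $2\times2$ rotation system. Both are valid under the assumed $C^1([0,T];C^\infty)$ regularity; the $L^{2k}$-limit argument has the advantage of not invoking the flow map at all (which makes it more robust when one later passes to less regular velocities), while your characteristics argument is more transparent and in fact shows the stronger fact that $w^2+\left<\rho\right>_z^2$ is conserved pointwise along trajectories, of which the stated $e^{T/Fr}$ bound is a crude consequence. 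No gaps.
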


\begin{proof}
Taking the $\LphP{2}$ inner product of \eqref{inviscid_slow_dynamics_eq_b} with $w$ and \eqref{viscous_slow_dynamics_eq_rho_z} with $\left<\rho\right>_z$ yield 
\begin{align*}
\frac{1}{2}\od{}{t} \norm{w}_{\LphP{2}}^2  &= -\frac{1}{Fr} \left(\left<\rho\right>_z, w \right)_h, \\
\frac{1}{2}\od{}{t} \norm{\left<\rho\right>_z}_{\LphP{2}}^2 &= \frac{1}{Fr} \left(w, \left<\rho\right>_z \right)_h.
\end{align*}
Adding the above equations implies that
\begin{align}
&\od{}{t} \left(\norm{w}_{\LphP{2}}^2 + \norm{\left<\rho\right>_z}_{\LphP{2}}^2 \right) = 0.
\end{align}
Integrating the above equation with respect to time on $[0,t]$ proves \eqref{nu_w_rho_z_L2_estimate}.

Taking the $\LpP{2}$ inner product of \eqref{inviscid_slow_dynamics_eq_c} with $\rho$ and using Young's inequality yield
\begin{align*}
\frac{1}{2}\od{}{t}\norm{\rho}_{\LpP{2}}^2 = \frac{1}{Fr}\left(w, \rho\right)
& \leq \frac{1}{Fr} \norm{w}_{\LpP{2}}\norm{\rho}_{\LpP{2}}\notag \\
& = \frac{L^{1/2}}{Fr}\norm{w}_{\LphP{2}}\norm{\rho}_{\LpP{2}}\notag \\
& \leq K_0\norm{\rho}_{\LpP{2}}, \label{nu_rho_estimate_proof}
\end{align*}
where
$$ K_0 := \frac{L^{1/2}}{Fr}\left(\norm{w^0}_{\LphP{2}}^2 + \norm{\left<\rho^0\right>_z}_{\LphP{2}}^2\right)^{1/2}. $$
Thus, we can conclude that
\begin{align*}
\od{}{t}\norm{\rho}_{\LpP{2}} \leq K_0.
\end{align*}
Integrating the above inequality with respect to time on $[0,t]$, we get that
\begin{align}
\norm{\rho(t)}_{\LpP{2}}^2 \leq \norm{\rho^0}_{\LpP{2}} + K_0t,
\end{align}
for all $t\in [0,T]$. This proves \eqref{nu_rho_L2_estimate}.

Now, we multiply \eqref{inviscid_slow_dynamics_eq_b} and \eqref{viscous_slow_dynamics_eq_rho_z} by $\left(w_n\right)^{2k-1}$ and $\left(\left<\rho_n\right>_z\right)^{2k-1}$, where $k\in \nN$, respectively, and integrate over $\nT^2$. Using H\"older inequality, we have
\begin{align*}
 \frac{1}{2k}\od{}{t}\int_{\nT^2} \left(w\right)^{2k}\, d\bx_h = -\frac{1}{Fr}\left(\left<\rho\right>_z,\left(w\right)^{2k-1}\right)_h & \leq \frac{1}{Fr}\norm{\left<\rho\right>_z}_{\LphP{2k}} \norm{w}_{\LphP{\frac{2k}{2k-1}}}\notag \\
 & = \frac{1}{Fr}\norm{\left<\rho\right>_z}_{\LphP{2k}} \norm{w}_{\LphP{2k}}^{2k-1} \end{align*}
 \begin{align*}
 \frac{1}{2k}\od{}{t}\int_{\nT^2} \left(\left<\rho\right>_z\right)^{2k}\, d\bx_h = -\frac{1}{Fr}\left(w, \left(\left<\rho\right>_z\right)^{2k-1}\right)_h & \leq \frac{1}{Fr}\norm{w}_{\LphP{2k}}\norm{\left<\rho\right>_z}_{\LphP{\frac{2k}{2k-1}}} \notag \\
  & =\frac{1}{Fr}\norm{w}_{\LphP{2k}} \norm{\left<\rho\right>_z}_{\LphP{2k}}^{2k-1}. \notag
\end{align*}
Thus,
\begin{align}
\od{}{t} \norm{w(t)}_{\LphP{2k}}& \leq \frac{1}{Fr}\norm{\left<\rho\right>_z}_{\LphP{2k}}, \quad \text{and} \quad \od{}{t} \norm{\left<\rho\right>_z}_{\LphP{2k}}& \leq \frac{1}{Fr}\norm{w}_{\LphP{2k}}.\notag
\end{align}
Adding the above equations and integrating over the time interval $[0,t]$, for $t\leq T$, imply that
\begin{align}
 \norm{w(t)}_{\LphP{2k}} + \norm{\left<\rho(t)\right>_z}_{\LphP{2k}} \leq \left(\norm{w^0}_{\LphP{2k}} + \norm{\left<\rho^0\right>_z}_{\LphP{2k}}\right) e^{t/Fr},
\end{align}
for all $t\in [0,T]$ and $k\in \nN$. Since the domain is bounded and the right-hand side bound converges, as $k\rightarrow\infty$, we can take $k\rightarrow\infty$ and obtain
\begin{align}\label{L_infty_estimate_w}
 \left(\norm{w(t)}_{\LphP{\infty}} + \norm{\left<\rho(t)\right>_z}_{\LphP{\infty}}\right) \leq \left(\norm{w^0}_{\LphP{\infty}} + \norm{\left<\rho^0\right>_z}_{\LphP{\infty}}\right) e^{t/Fr},                                                                                                                                                                                                                                                                        \end{align}
for all $t\in [0,T]$. This proves \eqref{nu_w_rho_z_Linfty_estimate}.

Now, we take the $\LphP{2}$ inner product of \eqref{inviscid_slow_dynamics_eq_b} wilth $-\Delta_hw$ and \eqref{viscous_slow_dynamics_eq_rho_z} with $-\Delta_h\left<\rho\right>_z$ and get that
\begin{align*}
&\frac{1}{2}\od{}{t} \norm{\nabla_hw}_{\LphP{2}}^2 \leq \norm{\nabla_h\bu_h}_{\LphP{\infty}}\norm{\nabla_hw}_{\LphP{2}}^2 -\frac{1}{Fr} \left(\nabla_h\left<\rho\right>_z, \nabla_hw \right)_h, \\
&\frac{1}{2}\od{}{t} \norm{\nabla_h\left<\rho\right>_z}_{\LphP{2}}^2 \leq \norm{\nabla_h\bu_h}_{\LphP{\infty}}\norm{\nabla_h\left<\rho\right>_z}_{\LphP{2}}^2 +\frac{1}{Fr} \left(\nabla_hw, \nabla_h\left<\rho\right>_z \right)_h.
\end{align*}
Adding the above equations and then integrating with respect to time on $[0,t]$ prove \eqref{nu_w_rho_z_H1_estimate}. This completes the proof.
\end{proof}

\begin{proposition}[{\it Apriori} Estimates]\label{nu_more_apriori_estimates}
Assume that $\bu_h$ $\in$ $C^1([0,T]; C^\infty(\nT^2))$, $w \in C^1([0,T]; C^\infty(\nT^2))$ and $\rho\in C^1([0,T]; C^\infty(\nT^3))$ are solutions of the system \eqref{inviscid_slow_dynamics_eq}
on the time interval $[0,T]$, subject to periodic boundary conditions. Then the following estimates hold true:
\begin{align}
&\stackrel[0\leq t\leq T]{}{\sup}\norm{\rho(t)}_{\LpP{\infty}} \leq \norm{\rho^0}_{\LpP{\infty}} + \left(\norm{w^0}_{\LphP{\infty}} + \norm{\left<\rho^0\right>_z}_{\LphP{\infty}}\right) e^{T/Fr}, \label{nu_rho_Linfty_estimate}\\
&\stackrel[0\leq t\leq T]{}{\sup}\left(\norm{\nabla_hw(t)}_{\LphP{\infty}}+ \norm{\nabla_h\left<\rho(t)\right>_z}_{\LphP{\infty}}\right) \leq {\tilde K}_0e^{\int_0^T \left(1+\norm{\nabla_h\bu_h(s)}_{\LphP{\infty}}\right)\, ds},\label{nu_w_rho_z_W1infty_estimate}
\end{align}
where ${\tilde K}_0$ is a constant that depends on the norms of the initial data.
\end{proposition}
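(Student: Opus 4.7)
The plan is to establish \eqref{nu_rho_Linfty_estimate} by repeating the $L^{2k}$ iteration that yielded \eqref{nu_w_rho_z_Linfty_estimate}, now applied to the three-dimensional equation \eqref{inviscid_slow_dynamics_eq_c}. The key observation is that the advecting field $(\bu_h,w)$ is divergence-free on $\nT^3$, since $\nabla_h\cdot\bu_h=0$ and $\partial_z w=0$; consequently, testing \eqref{inviscid_slow_dynamics_eq_c} against $\rho^{2k-1}$ and integrating over $\nT^3$ makes the transport terms vanish, leaving
\[
\frac{1}{2k}\od{}{t}\norm{\rho}_{\LpP{2k}}^{2k}=\frac{1}{Fr}\int_{\nT^3}w\,\rho^{2k-1}\,d\bx\leq \frac{1}{Fr}\norm{w}_{\LpP{2k}}\norm{\rho}_{\LpP{2k}}^{2k-1}.
\]
Since $w$ is $z$-independent, $\norm{w}_{\LpP{2k}}=L^{1/(2k)}\norm{w}_{\LphP{2k}}$, so that $\od{}{t}\norm{\rho}_{\LpP{2k}}\leq \tfrac{L^{1/(2k)}}{Fr}\norm{w}_{\LphP{2k}}$. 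Integrating in time, sending $k\to\infty$, and inserting \eqref{nu_w_rho_z_Linfty_estimate} for $\norm{w(s)}_{\LphP{\infty}}$ yields
\[
\norm{\rho(t)}_{\LpP{\infty}}\leq \norm{\rho^0}_{\LpP{\infty}}+\frac{1}{Fr}\int_0^t\bigl(\norm{w^0}_{\LphP{\infty}}+\norm{\left<\rho^0\right>_z}_{\LphP{\infty}}\bigr)e^{s/Fr}\,ds,
\]
which evaluates to \eqref{nu_rho_Linfty_estimate} upon noting that $e^{t/Fr}-1\leq e^{T/Fr}$ on $[0,T]$.

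For \eqref{nu_w_rho_z_W1infty_estimate}, the plan is to differentiate \eqref{inviscid_slow_dynamics_eq_b} and \eqref{viscous_slow_dynamics_eq_rho_z} in a horizontal coordinate $x_j$, $j\in\{1,2\}$, producing
\begin{align*}
&\partial_t(\partial_j w)+(\bu_h\cdot\nabla_h)(\partial_j w)=-(\partial_j\bu_h\cdot\nabla_h)w-\tfrac{1}{Fr}\partial_j\left<\rho\right>_z,\\
&\partial_t(\partial_j\left<\rho\right>_z)+(\bu_h\cdot\nabla_h)(\partial_j\left<\rho\right>_z)=-(\partial_j\bu_h\cdot\nabla_h)\left<\rho\right>_z+\tfrac{1}{Fr}\partial_j w.
\end{align*}
Testing each against $|\partial_j w|^{2k-2}\partial_j w$ and $|\partial_j\left<\rho\right>_z|^{2k-2}\partial_j\left<\rho\right>_z$ on $\nT^2$, exploiting $\nabla_h\cdot\bu_h=0$ to eliminate the transport contribution, and applying H\"older's inequality to the commutator and coupling terms gives, after summing over $j$,
\[
\od{}{t}\bigl(\norm{\nabla_hw}_{\LphP{2k}}+\norm{\nabla_h\left<\rho\right>_z}_{\LphP{2k}}\bigr)\leq C\bigl(\tfrac{1}{Fr}+\norm{\nabla_h\bu_h}_{\LphP{\infty}}\bigr)\bigl(\norm{\nabla_hw}_{\LphP{2k}}+\norm{\nabla_h\left<\rho\right>_z}_{\LphP{2k}}\bigr).
\]
Gr\"onwall's lemma followed by $k\to\infty$ produces the desired bound, with the bounded-on-$[0,T]$ factor $e^{CT/Fr}$ and the combinatorial constants absorbed into $\tilde K_0$, leaving the exponent in the claimed form $\int_0^T(1+\norm{\nabla_h\bu_h(s)}_{\LphP{\infty}})\,ds$.

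The main technical point is the uniform-in-$k$ passage $k\to\infty$: the right-hand sides of the $L^{2k}$ differential inequalities involve the initial data norms $\norm{\cdot}_{L^{2k}}$, which converge monotonically to $\norm{\cdot}_{L^\infty}$, together with domain-volume factors $|\nT^d|^{1/(2k)}\to 1$, so Gr\"onwall and monotone convergence transfer the bound from $L^{2k}$ to $L^{\infty}$. This is the same mechanism used already in Proposition \ref{nu_apriori_estimates}; accordingly, no genuinely new difficulty arises, and the remainder of the argument amounts to careful bookkeeping of the coupling constant $1/Fr$ and the stretching terms produced by horizontal differentiation.
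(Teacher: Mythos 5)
Your argument is correct, but it takes a genuinely different route from the paper. For \eqref{nu_rho_Linfty_estimate} the paper does not iterate $L^{2k}$ norms: it introduces the comparison function $\phi(t) = \norm{\rho^0}_{\LpP{\infty}} + \bigl(\norm{w^0}_{\LphP{\infty}} + \norm{\left<\rho^0\right>_z}_{\LphP{\infty}}\bigr)e^{t/Fr}$, derives the transport equation satisfied by $\Theta = \rho - \phi$, and tests with the positive part $\Theta^{+}$ to conclude $\Theta^{+}\equiv 0$ --- a weak maximum principle. For \eqref{nu_w_rho_z_W1infty_estimate} it likewise writes a transport equation for the regularized gradient magnitudes $Q^{\lambda}_{w}=\sqrt{|\nabla_h w|^2+\lambda}$, $Q^{\lambda}_{R}=\sqrt{|\nabla_h \left<\rho\right>_z|^2+\lambda}$, compares with an explicit $\phi_\lambda(t)$, and lets $\lambda\to 0^{+}$. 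Your $L^{2k}$-energy/Gr\"onwall/$k\to\infty$ scheme reaches the same conclusions and has two advantages: it is uniform with the method already used for \eqref{nu_w_rho_z_Linfty_estimate}, and it controls $|\rho|$ automatically (the paper's comparison argument as written only bounds $\rho$ from above and needs the symmetric argument for $-\rho$; it also carries a stray $1/Fr$ in its final display, which your computation avoids). What the paper's route buys is that it works pointwise, with no passage to the limit in $k$, and it produces the coefficient $1$ on $\norm{\nabla_h\bu_h}_{\LphP{\infty}}$ in the exponent directly, whereas your summation of the separate $\norm{\partial_j\cdot}_{\LphP{2k}}$ norms introduces a harmless absolute constant $C$ there; if you want the sharp coefficient you should instead test the $j$-th differentiated equation against $|\nabla_h w|^{2k-2}\partial_j w$ and sum over $j$ \emph{before} applying H\"older, which is exactly what the paper's $Q^{\lambda}$ device accomplishes. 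Since the stated exponent ($1+\norm{\nabla_h\bu_h}$) and the one the paper's own proof produces ($\tfrac{1}{Fr}+\norm{\nabla_h\bu_h}$) already differ by such a bookkeeping constant, this discrepancy is immaterial for the way the proposition is used.
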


\begin{proof}
We define
 $$\phi(t) : = \norm{\rho^0}_{\LpP{\infty}} +\left(\norm{w^0}_{\LphP{\infty}} + \norm{\left<\rho^0\right>_z}_{\LphP{\infty}} \right)e^{t/Fr}$$
 and we denote by $\Theta:= \rho - \phi(t)$. Notice that
\begin{align*}
\pd{\rho}{t} = \pd{\Theta}{t} + \od{\phi}{t} &= \pd{\Theta}{t} + \frac{1}{Fr} \left(\norm{w^0}_{\LphP{\infty}} + \norm{\left<\rho^0\right>_z}_{\LphP{\infty}}\right)e^{t/Fr}\\
& = \pd{\Theta}{t} + \frac{1}{Fr}\left(\phi(t) - \norm{\rho^0}_{\LpP{\infty}}\right).
\end{align*}
Then, $\Theta$ satisfies the evolution equation:
\begin{align}
\pd{\Theta}{t} + \frac{1}{Fr}\left(\phi(t) -\norm{\rho^0}_{\LpP{\infty}}\right) + \bu\cdot\nabla\Theta = \frac{1}{Fr}w. \label{strong_solution_Theta}
\end{align}
We can take the action of \eqref{strong_solution_Theta} with $\Theta^{+}$ and obtain
\begin{align}\label{L_2_Theta+}
&\frac{1}{2}\od{}{t} \norm{\Theta^{+}(t)}_{\LpP{2}}^2 = \frac{1}{Fr} \left(\left(w, \Theta^{+}\right) +\left(\norm{\rho^0}_{\LpP{\infty}}-\phi(t),\Theta^{+}\right)\right) \notag\\
& \qquad \leq \frac{1}{Fr} \left(\norm{w}_{L^\infty([0,t];\LphP{\infty}} + \norm{\rho^0}_{\LpP{\infty}} - \phi(t)\right)\norm{\Theta^{+}}_{\LpP{1}}.
\end{align}
Using \eqref{L_infty_estimate_w} and the definition of $\phi(t)$ we get that the right-hand side of \eqref{L_2_Theta+} is $\leq0$. Then,
\begin{align*}
 \od{\norm{\Theta^{+}}_{\LpP{2}}^2}{t} \leq 0, \quad \text{which implies that} \quad \norm{\Theta^{+}(t)}_{\LpP{2}}^2 &\leq \norm{\Theta^{+}(0)}_{\LpP{2}}^2, 
\end{align*}
for any $t\in[0,T]$.
Notice that
\begin{align*}
 \Theta(0;\bx) &= \rho(0;\bx) - \norm{\rho^0}_{\LpP{\infty}} - \frac{1}{Fr} \left( \norm{w^0}_{\LphP{\infty}} + \norm{\left<\rho^0\right>_z}_{\LphP{\infty}}\right)e^{t/Fr}
& \leq 0.
\end{align*}
Thus, $\Theta^{+}(0,\bx) = 0$ for all $\bx\in \nT^3$, which implies that $\norm{\Theta^{+}(t)}_{\LpP{\infty}} = 0$, for all $t\in [0,T]$.
That is, $\Theta^{+}(t;\bx) = 0$ a.e $\bx \in \nT^3$, for all $t\in[0,T]$, which yield that
\begin{align}
\rho(t;\bx) \leq \norm{\rho^0}_{\LpP{\infty}} + \frac{1}{Fr} \left(\norm{w^0}_{\LphP{\infty}} + \norm{\left<\rho^0\right>_z}_{\LphP{\infty}}\right) e^{t/Fr},
\end{align}
for a.e. $\bx \in\nT^3$ and all $t\in[0,T]$. This proves \eqref{nu_rho_Linfty_estimate}.

To simplify the notations in the proof, we denote by
\begin{align*}
R(t;\bx_h) &:= \left<\rho\right>_z(t;\bx_h), \\
Q^{\lambda}_w(t;\bx_h) &:= \sqrt{\abs{\nabla_hw(t;\bx_h)}^2 +\lambda}\quad \text{ and} \quad
Q^{\lambda}_R(t;\bx_h) &:= \sqrt{\abs{\nabla_hR(t;\bx_h)}^2 +\lambda},
\end{align*}
where $\lambda>0$ is any positive number. Taking the $\pd{}{x_j}$ of \eqref{inviscid_slow_dynamics_eq_b} and \eqref{viscous_slow_dynamics_eq_rho_z} yield the following system
\begin{align}
\pd{}{t} \pd{w}{x_j} + \left(\pd{\bu_h}{x_j}\cdot\nabla_h\right)w + (\bu_h\cdot\nabla_h)\pd{w}{x_j} & = -\frac{1}{Fr}\pd{R}{x_j}, \label{dxj_strong_solution_w_nu}\\
\pd{}{t} \pd{R}{x_j} + \left(\pd{\bu_h}{x_j}\cdot\nabla_h\right)R+ (\bu_h\cdot\nabla_h)\pd{R}{x_j} & = \frac{1}{Fr}\pd{w}{x_j}, \label{dxj_strong_solution_rho_z_nu}
\end{align}
for $j=1,2$. Since
\begin{align*}
\pd{Q_w^{\lambda}}{t} = \frac{\nabla_hw}{Q_w^{\lambda}} \cdot\pd{\nabla_hw}{t}, \quad \text{and} \quad \pd{Q_R^{\lambda}}{t} = \frac{\nabla_hR}{Q_R^{\lambda}} \cdot\pd{\nabla_hR}{t},
\end{align*}
we take the inner product of \eqref{dxj_strong_solution_w_nu} and \eqref{dxj_strong_solution_rho_z_nu} with $ \frac{\pd{w}{x_j}}{Q_w^{\lambda}}$ and $ \frac{\pd{R}{x_j}}{Q_R^{\lambda}}$, respectively, and then sum over $j=1,2$ and obtain
\begin{align}
\pd{Q_w^{\lambda}}{t} + \sum_{j=1}^{2} \left(\pd{\bu_h}{x_j}\cdot\nabla_h\right)w \frac{\pd{w}{x_j}}{Q_w^{\lambda}} + (\bu_h\cdot\nabla_h)Q_w^{\lambda} &=-\frac{1}{Fr} \nabla_hR \frac{\nabla_hw}{Q_w^{\lambda}}, \label{nabla_Q_w} \\
\pd{Q_R^{\lambda}}{t}+ \sum_{j=1}^{2} \left(\pd{\bu_h}{x_j}\cdot\nabla_h\right)R \frac{\pd{R}{x_j}}{Q_R^{\lambda}} + (\bu_h\cdot\nabla_h)Q_R^{\lambda} &=\frac{1}{Fr}\nabla_hw \frac{\nabla_hR}{Q_R^{\lambda}}. \label{nabla_Q_R}
\end{align}

Now, we define
\begin{align*}
\phi_\lambda(t) : = \left(\sqrt{\norm{\nabla_hw^0}_{\LphP{\infty}}+ \lambda} + \sqrt{\norm{\nabla_h\left<\rho^0\right>_z}_{\LphP{\infty}} + \lambda}\right) e^{\int_0^t\psi(s)\,ds},
\end{align*}
where $\psi(s) := \norm{\nabla_h\bu_h(s)}_{\LphP{\infty}} + \frac{1}{Fr}$. Denote by $\Theta_{w,\lambda} : = Q_w^{\lambda} - \phi_\lambda$ and by $\Theta_{R,\lambda} : = Q_R^{\lambda}- \phi_\lambda$. Its clear that $\pd{\phi_\lambda}{t}(t) = \psi(t)\phi_\lambda(t)$, thus we can replace $\pd{Q_w^{\lambda}}{t}$ in \eqref{nabla_Q_w} by $\pd{\Theta_{w,\lambda}}{t} + \psi\phi_\lambda$ and then take the inner product of the equation with $\Theta_{w,\lambda}^{+}$ and obtain
\begin{align*}
\frac{1}{2}\od{}{t} \norm{\Theta_{w,\lambda}^{+}}_{\LphP{2}}^2 + J_1 + J_2 = J_3 +J_4,
\end{align*}
where
\begin{align*}
J_1 & := \sum_{j=1}^2 \int_{\nT^2} \left(\pd{\bu_h}{x_j}\cdot\nabla_h\right)w \frac{\pd{w}{x_j}}{Q_w^{\lambda}} \Theta_{w,\lambda}^{+}\, d\bx_h, \qquad
J_2 :=  \int_{\nT^2} (\bu_h\cdot\nabla_h)\Theta_{w,\lambda}^{+}\Theta_{w,\lambda}^{+} \, d\bx_h, \\
J_3 & := -\frac{1}{Fr}\int_{\nT^2} \frac{\nabla_hR\cdot\nabla_h w}{Q_w^{\nu,\lambda}} \Theta_{w,\lambda}^{+}\, d\bx_h, \qquad\qquad
J_4  := -\phi_\lambda \psi \int_{\nT^2} \Theta_{w,\lambda}^{+}\, d\bx_h.
\end{align*}
The divergence free condition $\nabla_h\cdot\bu_h =0$ implies that $J_2=0$. By Cauchy-Schwarz inequality,
\begin{align}\label{J_2_estimate}
\abs{J_1} & \leq \int_{\nT^2} \frac{\abs{\nabla_h\bu_h}\abs{\nabla_hw}^2}{Q_w^{\lambda}}\Theta_{w,\lambda}^{+}\, d\bx_h
\leq \norm{\nabla_h\bu_h}_{\LphP{\infty}} \int_{\nT^2} \left( \Theta_{w,\lambda}+ \phi_\lambda\right)\Theta_{w,\lambda}^{+}\, d\bx_h\notag \\
& = \norm{\nabla_h\bu_h}_{\LphP{\infty}} \int_{\nT^2} \left(\Theta_{w,\lambda}^{+}\right)^2 \,d\bx_h + \norm{\nabla_h\bu_h}_{\LphP{\infty}}\phi_\lambda\int_{\nT^2} \Theta_{w,\lambda}^{+}\, d\bx_h,
\end{align}
and
\begin{align}\label{J_4_estimate}
\abs{J_3} &\leq \frac{1}{Fr}\int_{\nT^2} \frac{\abs{\nabla_hR}\abs{\nabla_hw}}{Q_w^{\lambda}} \Theta_{w,\lambda}^{+} \, d\bx_h \notag \\
& \leq\frac{1}{Fr} \int_{\nT^2} Q_R^{\lambda}\Theta_{w,\lambda}^{+}\, d\bx_h
 \leq\frac{1}{Fr} \int_{\nT^2} \Theta_{R, \lambda} \Theta_{w,\lambda}^{+} \, d\bx_h + \frac{1}{Fr}\phi_\lambda\int_{\nT^2} \Theta_{w,\lambda}^{+}\, d\bx_h \notag \\
& \leq \frac{1}{Fr}\int_{\nT^2} \Theta_{R, \lambda}^{+} \Theta_{w,\lambda}^{+} \, d\bx_h + \frac{1}{Fr}\phi_\lambda\int_{\nT^2} \Theta_{w,\lambda}^{+}\, d\bx_h \notag \\
& =\frac{1}{2Fr} \int_{\nT^2} \left( \Theta_{R,\lambda}^{+}\right)^2\, d\bx_h + \frac{1}{2Fr} \int_{\nT^2} \left(\Theta_{w,\lambda}^{+} \right)^2\, d\bx_h + \frac{1}{Fr}\phi_\lambda\int_{\nT^2} \Theta_{w,\lambda}^{+}\, d\bx_h,
\end{align}
where we used Young's inequality in the last step. Finally, from \eqref{J_2_estimate} and \eqref{J_4_estimate} we have
\begin{align}
\frac{1}{2} \od{}{t} \norm{\Theta_{w,\lambda}^{+}}_{\LphP{2}}^2 & \leq \norm{\nabla_h\bu_h}_{\LphP{\infty}}\norm{\Theta_{w,\lambda}^{+}}_{\LphP{2}}^2 + \norm{\nabla_h\bu_h}_{\LphP{\infty}} \phi_\lambda \norm{\Theta_{w,\lambda}^{+}}_{\LphP{1}} \notag \\
& \quad + \frac{1}{2Fr} \norm{\Theta_{w,\lambda}^{+}}_{\LphP{2}}^2 + \frac{1}{2Fr} \norm{\Theta_{R,\lambda}^{+}}_{\LphP{2}}^2  + \frac{1}{Fr}\phi_\lambda \norm{\Theta_{w,\lambda}^{+}}_{\LphP{1}} \notag \\
& \quad - \phi_\lambda \psi \norm{\Theta_{w,\lambda}^{+}}_{\LphP{1}}.
\end{align}
Similar argument will yield a similar inequality for $\Theta_{R,\lambda}^{+}$. After summing the two inequalities we get
\begin{align}
& \frac{1}{2}\od{}{t} \left(\norm{\Theta_{w,\lambda}^{+}}_{\LphP{2}}^2 +  \norm{\Theta_{R,\lambda}^{+}}_{\LphP{2}}^2 \right) \leq \notag \\
& \qquad \left(\norm{\nabla_h\bu_h}_{\LphP{\infty}} +\frac{1}{Fr} \right)\left(\norm{\Theta_{w,\lambda}^{+}}_{\LphP{2}}^2 +  \norm{\Theta_{R,\lambda}^{+}}_{\LphP{2}}^2 \right), \\
& \qquad + \phi_\lambda\left(\norm{\Theta_{w,\lambda}^{+}}_{\LphP{1}} +  \norm{\Theta_{R,\lambda}^{+}}_{\LphP{1}} \right)\left(\norm{\nabla_h\bu_h}_{\LphP{\infty}} + \frac{1}{Fr}-\psi  \right).
\end{align}
Since $\norm{\nabla_h\bu_h}_{\LphP{\infty}} + \frac{1}{Fr}-\psi =0$, by the definition of $\psi$, then integrating with respect to time over the interval $[0,t]$, for $t\leq T$, yields
\begin{align}
&\norm{\Theta_{w,\lambda}^{+}(t)}_{\LphP{2}}^2 +  \norm{\Theta_{R,\lambda}^{+}(t)}_{\LphP{2}}^2 \notag \\
&\qquad\qquad\leq \left(\norm{\Theta_{w,\lambda}^{+}(0)}_{\LphP{2}}^2 +  \norm{\Theta_{R,\lambda}^{+}(0)}_{\LphP{2}}^2\right) e^{\int_0^t \psi(s)\,ds} = 0,
\end{align}
since $\Theta_{w,\lambda}(0) = \sqrt{\abs{\nabla_hw^0}^2 + \lambda }- \phi_\lambda(0) $$\leq$ $0$, then  $\Theta^{+}_{w,\lambda}(0)=0$ and by a similar argument, $\Theta^{+}_{R,\lambda}(0)=0$. Therefore, for all $t\in[0,T]$ and a.e $\bx_h \in \nT^2$,
\begin{align*}
\sqrt{\abs{\nabla_hw(t)}^2 + \lambda} &\leq \phi_\lambda(t)\\ &= \left( \sqrt{\norm{\nabla_hw^0}^2_{\LphP{\infty}} + \lambda} + \sqrt{\norm{\nabla_hR^0}^2_{\LphP{\infty}} + \lambda} \right)e^{\int_0^t\psi(s)\, ds},\\
\sqrt{\abs{\nabla_hR(t)}^2 + \lambda} &\leq \phi_\lambda(t) \\ &= \left( \sqrt{\norm{\nabla_hw^0}^2_{\LphP{\infty}} + \lambda} + \sqrt{\norm{\nabla_hR^0}^2_{\LphP{\infty}} + \lambda} \right)e^{\int_0^t\psi(s)\, ds}.
\end{align*}
This implies that
\begin{align}
&\sqrt{\norm{\nabla_hw}^2_{\LphP{\infty}} + \lambda} + \sqrt{\norm{\nabla_hR}^2_{\LphP{\infty}}+ \lambda} \leq \notag \\
& \qquad 2\left( \sqrt{\norm{\nabla_hw^0}^2_{\LphP{\infty}} + \lambda} + \sqrt{\norm{\nabla_hR^0}^2_{\LphP{\infty}} + \lambda}\right)e^{\int_0^t\psi(s)\, ds},
\end{align}
for all $\lambda >0$. Since $\lambda>0$ is arbitrary, then we can take $\lambda\rightarrow0^{+}$ in the above inequality. This proves \eqref{nu_w_rho_z_W1infty_estimate} and completes the proof.
\end{proof}

\begin{definition}[Weak Solutions]
\label{weak_solution_definition_inviscid_slow_dynamics_eq}
Let $s>2$,  $\bu_h^0(\bx_h) \in \HhP{s}$, $w^0(\bx_h) \in \HhP{1} \cap\LphP{\infty}$, $ \left<\rho^0\right>_z(\bx_h) \in \HhP{1}\cap\LphP{\infty}$ and $\rho^0\in \LpP{2}$. For any $T>0$, we say that $\bu_h(t;\bx_h)$, $p(t;\bx_h)$, $w(t;\bx_h)$ and $\rho(t;\bx)$ is a weak solution of system \eqref{inviscid_slow_dynamics_eq} on the time interval $[0,T]$ if
\begin{subequations}
\begin{align}
\bu_h \in C([0,T]; \HhP{s})\cap C^1([0,T];\HhP{s-1}),\\
p \in C([0,T];\HhP{s}), \\
w \in L^\infty([0,T];  \HhP{1}\cap\LphP{\infty}), \\
\left<\rho\right>_z \in L^\infty([0,T]; \HhP{1}\cap\LphP{\infty}), \\
\rho \in L^\infty([0,T]; \LpP{2}).
\end{align}
\end{subequations}
Moreover, $\bu_h, w$ and $\rho$ satisfy \eqref{inviscid_slow_dynamics_eq} in the distribution sense; that is, for any $\phi(t;\bx_h), \chi_1(t;\bx_h), \chi_2(t;\bx_h) \in {\mathcal D}([0,T]\times\nT^2)$, with $\phi(T,\bx_h) =\chi_1(T,\bx_h) = \chi_2(T,\bx_h)= 0$, and any $\psi(t;\bx) \in {\mathcal D}([0,T]\times \nT^3)$ with $\psi(T,\bx) =0$, such that $\nabla_h\cdot\phi$ $=$ $\nabla_h\cdot\chi_1$ $=$ $\nabla_h\cdot\chi_2$ $=$ $\nabla\cdot \psi$ $=$ $0$, the following integral identities hold:
\begin{subequations}
\begin{align}
& \int_0^T \left(\bu_h(s),\phi^{'}(s)\right)_h \,ds + \int_0^T \left((\bu_h(s)\cdot \nabla_h)\phi(s), \bu_h(s)\right)_h\,ds \notag \\
& \quad\qquad \qquad \qquad \qquad\qquad\qquad\qquad\qquad\qquad\qquad\qquad= - \left(\bu_h^0, \phi^0\right)_h, \label{strong_solution_inviscid_slow_dynamics_u_h}\\
&\int_0^T \left(w(s),\chi_1^{'}(s)\right)_h \,ds  + \int_0^T \left((\bu_h(s)\cdot \nabla_h)\chi_1(s),w(s)\right)_h\,ds \notag \\
& \qquad \qquad \qquad \qquad\qquad= - \left(w^0, \chi_1^0\right)_h+ \frac{1}{Fr} \int_0^T\left(\left<\rho(s)\right>_z,\chi_1(s)\right)_h\,ds, \label{strong_solution_inviscid_slow_dynamics_w}\\
&\int_0^T \left(\left<\rho\right>_z(s),\chi_2^{'}(s)\right)_h \,ds  + \int_0^T \left((\bu_h(s)\cdot \nabla_h)\chi_2(s),\left<\rho\right>_z(s)\right)_h\,ds \notag \\
& \qquad \qquad \qquad \qquad\qquad= - \left(\left<\rho^0\right>_z, \chi_2^0\right)_h-\frac{1}{Fr} \int_0^T\left(w(s),\chi_2(s)\right)_h\,ds,
 \label{strong_solution_slow_dynamics_rho_z}\end{align}\begin{align}
&\int_0^T \left(\rho(s), {\psi^{'}}(s)\right)\,ds   +\int_0^T\left((\bu(s)\cdot \nabla)\psi(s),\rho(s)\right)\,ds \notag \\
&\qquad \qquad \qquad \qquad\qquad\;= -\left(\rho^0, \psi^0\right) -\frac{1}{Fr}\int_0^T\left(w(s),\psi(s)\right)\,ds.  \label{strong_solution_inviscid_slow_dynamics_rho}
\end{align}
\end{subequations}
\end{definition}

\begin{theorem}[Global existence and uniqueness of weak solutions]\label{weak_existence_uniqueness_inviscid}
Let $s>2$, $\bu_h^0 \in \HhP{s}$, $w^0\in \HhP{1}\cap\LphP{\infty}$, $\left<\rho^0\right>_z\in \HhP{1}\cap\LphP{\infty}$ and $\rho^0 \in \LpP{2}$. Then, for any given $T>0$, system \eqref{inviscid_slow_dynamics_eq} has a unique weak solution in the sense of Definition \ref{weak_solution_definition_inviscid_slow_dynamics_eq}. Moreover, the solution satisfies the estimates in Proposition \ref{nu_apriori_estimates}.
\end{theorem}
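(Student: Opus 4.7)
The strategy is to decouple the system and solve it step by step, exploiting the one-way coupling between the equations: $\bu_h$ depends only on itself, the pair $(w,\langle\rho\rangle_z)$ depends on itself and on $\bu_h$, and $\rho$ depends on all of the above but is itself linear given $\bu_h$ and $w$. I will then derive uniqueness at each stage from the linearity of the lower equations, and I will transport the a priori estimates of Propositions~\ref{nu_apriori_estimates} and~\ref{nu_more_apriori_estimates} to the constructed solution.

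Step 1: Construct $\bu_h$. Since \eqref{inviscid_slow_dynamics_eq_a} and the divergence-free condition form the 2D Euler system \eqref{2D_Euler} and $\bu_h^0\in \HhP{s}$ with $s>2$, Theorem~\ref{Kato_theorem} (with $q=2$) yields a unique
$\bu_h\in C([0,T];\HhP{s})\cap C^1([0,T];\HhP{s-1})$ on $[0,T]$, together with the pressure $p\in C([0,T];\HhP{s})$. The Sobolev embedding $\HhP{s}\hookrightarrow W^{1,\infty}_h(\nT^2)$ gives $\nabla_h\bu_h\in L^\infty([0,T]\times\nT^2)$, which is the key regularity needed below.

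Step 2: Construct $(w,\langle\rho\rangle_z)$. With $\bu_h$ fixed, \eqref{inviscid_slow_dynamics_eq_b}--\eqref{viscous_slow_dynamics_eq_rho_z} form a coupled linear 2D transport system with the same smooth, divergence-free drift $\bu_h$ and linear zero-order coupling through $\tfrac{1}{Fr}$. Introducing $f:=w+i\langle\rho\rangle_z$ reduces it to the single scalar transport equation $\partial_t f+(\bu_h\cdot\nabla_h)f=\tfrac{i}{Fr}f$, to which Theorems~\ref{DiPerna_Lions_existence_1989}--\ref{DiPerna_Lions_uniqueness_1989} apply (after lifting to $\nR^2$ by periodic extension and a standard localization, or via a direct Galerkin argument for the coupled pair). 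This produces $w,\langle\rho\rangle_z\in L^\infty([0,T];\LphP{2}\cap\LphP{\infty})$. Regularizing the initial data, applying Propositions~\ref{nu_apriori_estimates} and~\ref{nu_more_apriori_estimates} to the smooth approximants, and passing to the limit via weak-$\ast$ compactness upgrades the solution to $L^\infty([0,T];\HhP{1}\cap\LphP{\infty})$ with the quantitative bounds \eqref{nu_w_rho_z_L2_estimate}, \eqref{nu_w_rho_z_Linfty_estimate}, \eqref{nu_w_rho_z_H1_estimate}. Uniqueness at this step follows from Theorem~\ref{DiPerna_Lions_uniqueness_1989} applied to the difference of two solutions.

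Step 3: Construct $\rho$. Equation~\eqref{inviscid_slow_dynamics_eq_c} is a linear 3D transport equation
\[
\partial_t\rho+(\bu\cdot\nabla)\rho=\tfrac{1}{Fr}w,\qquad \bu=(\bu_h,w),
\]
with $\nabla\cdot\bu=\nabla_h\cdot\bu_h+\partial_zw=0$ by \eqref{div_free_slow_dynamics_eq_vorticity}. Because $w$ is independent of $z$, the drift $\bu$ lies in $L^\infty([0,T];H^1(\nT^3))\cap L^\infty([0,T]\times\nT^3)$, so Theorems~\ref{DiPerna_Lions_existence_1989}--\ref{DiPerna_Lions_uniqueness_1989} (with $p=2$, $q=2$, periodic setting) give a unique $\rho\in L^\infty([0,T];\LpP{2})$ for the initial datum $\rho^0\in\LpP{2}$ and the source $\tfrac{1}{Fr}w\in L^\infty([0,T];\LpP{\infty})$. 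The bound \eqref{nu_rho_L2_estimate} follows from the a priori estimate already established in Proposition~\ref{nu_apriori_estimates}, transported to the weak solution by the DiPerna-Lions renormalization procedure.

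Step 4: Assembly and verification. Collect the solutions from Steps 1--3 and check they satisfy the integral identities \eqref{strong_solution_inviscid_slow_dynamics_u_h}--\eqref{strong_solution_inviscid_slow_dynamics_rho} in Definition~\ref{weak_solution_definition_inviscid_slow_dynamics_eq}; this is a direct computation using integration by parts in time and the divergence-free conditions on the test fields. Uniqueness of the full triple follows by the staircase argument: two weak solutions share the same $\bu_h$ by Theorem~\ref{Kato_theorem}, hence the same $(w,\langle\rho\rangle_z)$ by Step 2, and hence the same $\rho$ by Step 3. I expect the main obstacle to be verifying the DiPerna-Lions hypotheses in the periodic setting (their theorems are stated on $\nR^n$), especially for the 3D equation where the drift $w$ has only $H^1_h$ regularity; this is handled cleanly by the $z$-independence of $w$ which preserves $H^1(\nT^3)$ regularity of $\bu$ and by a periodic-to-$\nR^n$ lifting.
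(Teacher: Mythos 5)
Your proposal follows essentially the same route as the paper: Theorem \ref{Kato_theorem} for $\bu_h$, the DiPerna--Lions framework (with mollification of the drift and data, the \emph{a priori} estimates of Proposition \ref{nu_apriori_estimates} on the approximants, and weak-$\ast$ passage to the limit) for the $(w,\left<\rho\right>_z)$ pair, Theorems \ref{DiPerna_Lions_existence_1989}--\ref{DiPerna_Lions_uniqueness_1989} for $\rho$, and a staircase uniqueness argument. Your complexification $f=w+i\left<\rho\right>_z$ reducing the coupled pair to a single scalar transport equation is a tidy shortcut the paper does not use (it mollifies the coupled system directly), but it does not change the substance of the argument.
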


\begin{proof}
By Theorem \ref{Kato_theorem}, there exists a unique solution $\bu_h$ $\in$ $C([0,T];\HhP{s})$ $\cap$ $C^1([0,T]; \HhP{s-1})$ and $p$ $\in$ $C([0,T];\HhP{s})$ of the incompressible ($\nabla_h\cdot\bu_h=0$) two-dimensional Euler equations \eqref{inviscid_slow_dynamics_eq_a}. The solution is classical and will satisfy \eqref{strong_solution_inviscid_slow_dynamics_u_h} and the estimate \eqref{u_Hs_norm}. It is clear that $\nabla_h\bu_h$ $\in$ $L^\infty([0,T];\LphP{\infty})$, and by the elliptic regularity estimate \eqref{elliptic_regularity_Yudovich}, $\bu_h$ $\in$ $L^\infty([0,T];\WhP{1}{q})$ $\cap$ $L^\infty([0,T];\LphP{\infty})$, for any $q\in[2,\infty)$.

Now we consider the system:
\begin{subequations}\label{w_rho_0}
\begin{align}
\pd{w}{t} + (\bu_h\cdot\nabla_h) w  = -\frac{1}{Fr}\left<\rho\right>_z&, \\
\pd{\left<\rho\right>_z}{t}+ (\bu_h\cdot\nabla_h)\left<\rho\right>_z = \frac{1}{Fr}w&, \\
w(0;\bx_h) = w^0(\bx_h), \quad \left<\rho\right>_z(0;\bx_h)= \left<\rho\right>_z(\bx_h)&,
\end{align}
\end{subequations}

To prove the existence of $w(t;\bx_h)$ and $\left<\rho\right>_z(t;\bx_h)$,  we will follow some ideas introduced by DiPerna and Lions in \cite{DiPerna_Lions_1989}. Let $\eta(\bx_h)\in \mathcal{D}(\nR^2)$, $\int_{\nR^2}\eta(\bx_h)\,d\bx_h = 1$ . Consider $\bu_{h,\veps} = \bu_h\convh \eta_\veps$, $w^0_\veps= w^0\convh\eta_\veps$, $\left<\rho^0_\veps\right>_z = \left<\rho^0\right>_z\convh\eta_\veps$ where $\eta_\veps(.) = \frac{1}{\veps} \eta\left(\frac{.}{\veps}\right)$. Then by standard consideration, there exists a unique classical solution $w_\veps$, $\left<\rho_\veps\right>_z$ $\in$ $C([0,T];C^1_b(\nT^2))$ of
\begin{subequations}
\begin{align*}
\pd{w_\veps}{t} + (\bu_{h,\veps}\cdot\nabla_h) w_\veps  = -\frac{1}{Fr}\left<\rho_\veps\right>_z&, \\
\pd{\left<\rho_\veps\right>_z}{t}+ (\bu_{h,\veps}\cdot\nabla_h) \left<\rho_\veps\right>_z = \frac{1}{Fr}w_\veps&, \\
w_\veps(0;\bx_h) = w^0_\veps(\bx_h), \quad \left<\rho_\veps\right>_z(0; \bx_h)= \left<\rho^0_\veps\right>_z(\bx_h)&,
\end{align*}
\end{subequations}
which clearly satisfy \eqref{strong_solution_inviscid_slow_dynamics_w} and \eqref{strong_solution_slow_dynamics_rho_z}. Moreover, the solution $w_\veps$ and $\left<\rho_\veps\right>_z$ satisfy the estimates \eqref{nu_w_rho_z_Linfty_estimate} and \eqref{nu_w_rho_z_H1_estimate},
for any $\veps>0$. By the Banach-Alaoglo compactness theorem, we can extract a subsequence (which we will still denote $\{w_\veps\}_{\veps>0}$, $\{\left<\rho_\veps\right>_z\}_{\veps>0}$) such that
\begin{align}\label{w_rho_0_veps}
w_\veps \weakstar w&, \qquad \text{ in } L^\infty([0,T];\LphP{\infty}),\\
\left<\rho_\veps\right>_z \weakstar \left<\rho\right>_z &, \qquad \text{ in } L^\infty([0,T];\LphP{\infty}),
\end{align}
for some $w$, $\left<\rho\right>_z$ $\in$ $L^\infty([0,T];\LphP{\infty})$, as $\veps\strong0$. The solution $w$ and $\left<\rho\right>_z$ will inherit the estimate \eqref{nu_w_rho_z_Linfty_estimate}.
Recall that
\begin{align}\label{u_h_veps}
\bu_{h,\veps} \strong \bu_h, \qquad \text{ in } L^\infty([0,T];\LphP{1}),
\end{align}
as $\veps \strong 0$. The strong convergence \eqref{u_h_veps} and the weak-$\ast$ convergence \eqref{w_rho_0_veps} are enough to pass to the limit in \eqref{strong_solution_inviscid_slow_dynamics_w} and \eqref{strong_solution_slow_dynamics_rho_z} and show that $w$ and $\left<\rho\right>_z$ is a weak solution of system \eqref{w_rho_0}.

The uniqueness of $w$ and $\left<\rho\right>_z$ will follow by a similar argument in the proof of Theorem \ref{DiPerna_Lions_uniqueness_1989} of DiPerna and Lions since $\bu_h \in L^\infty([0,T];\WhP{1}{q}\cap\LphP{\infty})$ for any $q\in [2,\infty)$ . The argument is left to the reader. Since the solution $w$ and $\left<\rho\right>_z$ is unique, it will inherit the estimate \eqref{nu_w_rho_z_H1_estimate} using the Banach--Alaoglo compactness theorem.

The existence and the uniqueness of a solution $\rho(t;\bx)\in L^\infty([0,T];\LpP{2})$ of the linear equation \eqref{inviscid_slow_dynamics_eq_c} that satisfies \eqref{strong_solution_inviscid_slow_dynamics_rho} follows by Theorem \ref{DiPerna_Lions_existence_1989} and Theorem \ref{DiPerna_Lions_uniqueness_1989} since $\bu_h, w\in L^\infty([0,T];\HhP{1}\cap\LphP{\infty})$. Finally, we recall that the proof  of Theorem \ref{DiPerna_Lions_existence_1989} is based on the same idea of constructing an approximate sequence of solutions we used in the above proof. Thus, by using the Banach--Alaoglo compactness Theorem the solution $\rho(t;\bx)$ will inherit the estimate \eqref{nu_rho_L2_estimate}. For more details, see the proof of Theorem  \ref{DiPerna_Lions_existence_1989} in \cite{DiPerna_Lions_1989}.
\end{proof}

\begin{definition}[Strong Solutions]
\label{strong_solution_definition_inviscid_slow_dynamics_eq}
Let $s>2$,  $\bu_h^0(\bx_h) \in \HhP{s}$, $w^0(\bx_h) \in \WhP{1}{\infty}$, $ \left<\rho^0\right>_z(\bx_h) \in \WhP{1}{\infty}$ and $\rho^0(\bx)\in \LpP{\infty}$. We say that $\bu_h(t;\bx_h)$, $p(t;\bx_h)$, $w(t;\bx_h)$ and $\rho(t;\bx)$ is a strong solution  of system \eqref{inviscid_slow_dynamics_eq} on the time interval $[0,T]$ if it is a weak solution of \eqref{inviscid_slow_dynamics_eq} in the sense of Definition \ref{weak_solution_definition_inviscid_slow_dynamics_eq} and satisfies
\begin{subequations}
\begin{align}
w \in  L^\infty([0,T];  \WhP{1}{\infty}), \\
\left<\rho\right>_z \in L^\infty([0,T];\WhP{1}{\infty}), \\
\rho \in L^\infty([0,T];\LpP{\infty}).
\end{align}
\end{subequations}
\end{definition}

\begin{theorem}[Global well-posedness of strong solutions]\label{strong_wellposedness_inviscid}
Let $s>2$, $\bu_h^0 \in \HhP{s}$, $w^0\in \WhP{1}{\infty}$, $\left<\rho^0\right>_z\in \WhP{1}{\infty}$ and $\rho^0 \in \LpP{\infty}$. Then, for any given $T>0$, system \eqref{inviscid_slow_dynamics_eq} has a unique strong solution in the sense of Definition \ref{strong_solution_definition_inviscid_slow_dynamics_eq} that satisfies the estimates in Proposition \ref{nu_apriori_estimates} and Proposition \ref{nu_more_apriori_estimates}.

Assume that $\bu_h^1$, $p^1$, $w^1$, $\rho^1$ and $\bu_h^2$, $p^2$, $w^2$, $\rho^2$ are two strong solutions of system \eqref{inviscid_slow_dynamics_eq}, in the sense of Definition \ref{strong_solution_definition_inviscid_slow_dynamics_eq}, with corresponding initial data $\bu_h^{1,0}$,  $w^{1,0}$, $\rho^{1,0}$, $\bu_h^{2,0}$, $w^{2,0}$ and $\rho^{2,0}$, respectively. Define $\xi^{i} := (-\Delta)^{-1} \rho^{i}$ and $\xi^{i,0} := (-\Delta)^{-1} \rho^{i,0}$ for $i=1,2$.  Then,
\begin{align}\label{cont_dep_slow_dynamics_eq}
D(t) &\leq D(0)e^{C_0^{1,2}t}; \\
D(t)&: = \norm{(\bu_h^1-\bu_h^2)(t)}_{\LphP{2}}^2 + \norm{(w^1-w^2)(t)}_{\LphP{2}}^2 \notag\\
&\quad + \norm{\left<\rho^1-\rho^2\right>_z(t)}_{\LphP{2}}^2+\norm{\nabla(\xi^1-\xi^2)(t)}_{\LpP{2}}^2, \notag
\end{align}
for all $t\in[0,T]$, where $C_{0}^{1,2} = C_0^{1,2}(L,T, Fr)$ is a constant that depends on $T$, $L$, $Fr$ and may depend on the norms of the initial data $\bu_h^{1,0}$, $\bu_h^{2,0}$, $w^{1,0}$, $w^{2,0}$, $\rho^{1,0}$ and  $\rho^{2,0}$.
\end{theorem}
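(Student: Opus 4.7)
\emph{Existence and uniqueness.} The hypotheses here are strictly stronger than those of Theorem \ref{weak_existence_uniqueness_inviscid}, which therefore already delivers a unique weak solution; all that remains is to upgrade its regularity to satisfy Definition \ref{strong_solution_definition_inviscid_slow_dynamics_eq}. I would revisit the mollification scheme in the proof of Theorem \ref{weak_existence_uniqueness_inviscid}: to each smooth approximation $(w_\veps,\langle\rho_\veps\rangle_z,\rho_\veps)$ apply the bounds of Proposition \ref{nu_more_apriori_estimates}, which are legitimate because the approximations are classical and $\|\nabla_h\bu_{h,\veps}\|_{\LphP{\infty}}$ is uniformly controlled via the 2D Euler estimate \eqref{u_Hs_norm}. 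Passing to the weak-$\ast$ limit via Banach--Alaoglu in $L^\infty([0,T]; W^{1,\infty}_h(\nT^2))$ and in $L^\infty([0,T]; \LpP{\infty})$ yields $w,\langle\rho\rangle_z\in L^\infty([0,T];W^{1,\infty}_h)$ and $\rho\in L^\infty([0,T];\LpP{\infty})$, while uniqueness is inherited from Theorem \ref{weak_existence_uniqueness_inviscid}.

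\emph{Continuous dependence.} Setting $\tilde\bu_h:=\bu_h^1-\bu_h^2$, $\tilde w:=w^1-w^2$, $\tilde\rho:=\rho^1-\rho^2$, $\tilde p:=p^1-p^2$, $\tilde\bu:=(\tilde\bu_h,\tilde w)$, $\tilde\xi:=\xi^1-\xi^2$, subtracting the equations for the two solutions gives
\begin{align*}
&\partial_t\tilde\bu_h+(\bu_h^1\cdot\nabla_h)\tilde\bu_h+(\tilde\bu_h\cdot\nabla_h)\bu_h^2+\nabla_h\tilde p=0,\\
&\partial_t\tilde w+(\bu_h^1\cdot\nabla_h)\tilde w+(\tilde\bu_h\cdot\nabla_h)w^2=-\tfrac{1}{Fr}\langle\tilde\rho\rangle_z,\\
&\partial_t\langle\tilde\rho\rangle_z+(\bu_h^1\cdot\nabla_h)\langle\tilde\rho\rangle_z+(\tilde\bu_h\cdot\nabla_h)\langle\rho^2\rangle_z=\tfrac{1}{Fr}\tilde w,\\
&\partial_t\tilde\rho+(\bu^1\cdot\nabla)\tilde\rho+(\tilde\bu\cdot\nabla)\rho^2=\tfrac{1}{Fr}\tilde w.
\end{align*}
Standard $\LphP{2}$ energy estimates on the first three equations are routine: the transport terms with $\bu_h^1$ vanish by $\nabla_h\cdot\bu_h^1=0$, the pressure term vanishes by $\nabla_h\cdot\tilde\bu_h=0$, and the ``transfer'' terms $(\tilde\bu_h\cdot\nabla_h)(\,\cdot\,)^2$ are controlled by $\|\nabla_h\bu_h^2\|_{\LphP{\infty}}$, $\|\nabla_h w^2\|_{\LphP{\infty}}$, $\|\nabla_h\langle\rho^2\rangle_z\|_{\LphP{\infty}}$, each bounded on $[0,T]$ by Proposition \ref{nu_more_apriori_estimates}.

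The main difficulty---and the reason $\|\nabla\tilde\xi\|_{\LpP{2}}^2=\|\tilde\rho\|_{H^{-1}(\nT^3)}^2$ appears inside $D(t)$---is the fourth equation: since $\rho^i\in\LpP{\infty}$ only, the term $(\tilde\bu\cdot\nabla)\rho^2$ is not $L^2$-integrable and a direct $L^2$ energy identity for $\tilde\rho$ fails. I would instead test the $\tilde\rho$-equation against $\tilde\xi=(-\Delta)^{-1}\tilde\rho$ to get
\begin{align*}
\tfrac{1}{2}\tfrac{d}{dt}\|\nabla\tilde\xi\|_{\LpP{2}}^2 = -\int_{\nT^3}\bigl[(\bu^1\cdot\nabla)\tilde\rho\bigr]\tilde\xi\,d\bx-\int_{\nT^3}\bigl[(\tilde\bu\cdot\nabla)\rho^2\bigr]\tilde\xi\,d\bx+\tfrac{1}{Fr}\int_{\nT^3}\tilde w\,\tilde\xi\,d\bx.
\end{align*}
Using $\tilde\rho=-\Delta\tilde\xi$, a double integration by parts---first moving $\bu^1\cdot\nabla$ onto $\tilde\xi$ (allowed by $\nabla\cdot\bu^1=\nabla_h\cdot\bu_h^1+\partial_z w^1=0$) and then cancelling $\sum_j u^1_j\partial_j(\tfrac12|\nabla\tilde\xi|^2)$ by the same divergence-free identity---reduces the first term to $\sum_{j,k}\int(\partial_k u^1_j)\partial_k\tilde\xi\,\partial_j\tilde\xi\,d\bx \leq \|\nabla\bu^1\|_{\LpP{\infty}}\|\nabla\tilde\xi\|_{\LpP{2}}^2$, where $\|\nabla\bu^1\|_{\LpP{\infty}}\leq\|\nabla_h\bu_h^1\|_{\LphP{\infty}}+\|\nabla_h w^1\|_{\LphP{\infty}}$ is finite since $\bu_h^1\in\HhP{s}\hookrightarrow W^{1,\infty}_h$ (for $s>2$) and $w^1\in W^{1,\infty}_h$. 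The second term, after one integration by parts using $\nabla\cdot\tilde\bu=0$, becomes $\int\rho^2(\tilde\bu\cdot\nabla\tilde\xi)\,d\bx\leq\|\rho^2\|_{\LpP{\infty}}\|\tilde\bu\|_{\LpP{2}}\|\nabla\tilde\xi\|_{\LpP{2}}$; since $\tilde\bu_h,\tilde w$ are $z$-independent, $\|\tilde\bu\|_{\LpP{2}}^2=L(\|\tilde\bu_h\|_{\LphP{2}}^2+\|\tilde w\|_{\LphP{2}}^2)$. The last term is handled by Poincar\'e and Young.

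Adding the four inequalities, applying Young's inequality to the cross-products, and absorbing the uniform bounds coming from Propositions \ref{nu_apriori_estimates}--\ref{nu_more_apriori_estimates}, everything collapses into $\tfrac{d}{dt}D(t)\leq C^{1,2}_0 D(t)$, with $C^{1,2}_0=C^{1,2}_0(L,T,Fr)$ depending only on $L,T,Fr$ and on the initial data of the two solutions. Gr\"onwall's inequality then yields \eqref{cont_dep_slow_dynamics_eq}. The hard part is precisely the $H^{-1}$-type estimate for $\tilde\rho$: it closes only because $\rho^i\in L^\infty$ and $\bu^1\in W^{1,\infty}$, which is exactly the regularity provided by the strong-solution framework.
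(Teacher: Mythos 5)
Your proposal follows the paper's proof essentially step for step: existence and uniqueness are inherited from Theorem \ref{weak_existence_uniqueness_inviscid}, with the bounds of Proposition \ref{nu_more_apriori_estimates} carried through the same approximation argument to upgrade to a strong solution, and continuous dependence is obtained by exactly the paper's device of measuring the density difference in the $H^{-1}$-type norm $\norm{\nabla\tilde\xi}_{\LpP{2}}$ with $\tilde\xi=(-\Delta)^{-1}\tilde\rho$, testing the difference equation against $\tilde\xi$, integrating by parts via the divergence-free conditions, and closing with Gr\"onwall. The approach and the key estimates coincide with the paper's; no gaps.
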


\begin{proof}
The existence and uniqueness of a weak solution follows by Theorem \ref{weak_existence_uniqueness_inviscid}. The solution will satisfy the estimates in Proposition \ref{nu_more_apriori_estimates} by the same argument presented in the proof of Theorem \ref{weak_existence_uniqueness_inviscid}. This proves that the solution is a strong solution.
Assume that $\bu_h^1$, $p^1$, $w^1$, $\rho^1$ and $\bu_h^2$, $p^2$, $w^2$, $\rho^2$ are two strong solutions of system \eqref{inviscid_slow_dynamics_eq} with corresponding initial data $\bu_h^{1,0}$, $w^{1,0}$, $\rho^{1,0}$ , $\bu_h^{2,0}$, $w^{2,0}$ and $\rho^{2,0}$, respectively. Following the idea introduced in \cite{Larios_Lunasin_Titi_2010}, we introduce the stream function $\xi^i$, where $\rho^i := -\Delta \xi^i$ and $\int_{\nT^3} \xi^i(t;\bx)\, d\bx = 0$, for all $t\in [0,T]$ with corresponding initial condition $\xi^{i,0}$, for $i=1,2$. We denote by ${\tilde \bu}_h := \bu_h^1-\bu_h^2$, ${\tilde p}:= p^1-p^2$, ${\tilde w} := w^1-w^2$, ${\tilde \rho} := \rho^1-\rho^2$ and ${\tilde \xi} := \xi^1 -\xi^2$. It is easy to check that ${\tilde \bu}_h$, ${\tilde w}$, ${\tilde \rho}$ and ${\tilde \xi}$ will satisfy the functional equations
\begin{subequations}\label{cont_dep_slow_limiting_eq}
\begin{align}
&\pd{{\tilde \bu}_h}{t} + (\bu_h^1\cdot\nabla_h){\tilde \bu}_h + ({\tilde \bu}_h\cdot\nabla_h)\bu_h^2  +\nabla_h{\tilde p}= 0,  \;\; \quad \text{ in } C([0,T];\HhP{s-1}), \label{cont_dep_slow_limiting_eq_u}\\
&\pd{{\tilde w}}{t} + (\bu_h^1\cdot\nabla_h){\tilde w} + ({\tilde \bu}_h \cdot \nabla_h) w^2 = -\frac{1}{Fr} \left<{\tilde \rho}\right>_z, \; \;\quad \text{ in }  L^2([0,T]; \LphP{2}), \label{cont_dep_slow_limiting_eq_w}\\
&\pd{\left<{\tilde \rho}\right>_z}{t} + (\bu_h^1\cdot\nabla_h)\left<{\tilde \rho}\right>_z + ({\tilde \bu}_h\cdot \nabla_h)\left<\rho^2\right>_z  = \frac{1}{Fr} {\tilde w}, \;  \text { in } L^2([0,T];\LphP{2}), \label{cont_dep_slow_limiting_eq_rho_z}\\
&-\pd{\Delta {\tilde \xi}}{t} - (\bu_h^1\cdot\nabla_h) \Delta{\tilde \xi} - ({\tilde \bu}_h\cdot \nabla_h)\Delta \xi^2 - w^1\pd{\Delta{\tilde \xi}}{z} + {\tilde w}\pd{\rho^2}{z}\notag \\ &\quad \qquad\qquad\qquad\qquad\qquad\qquad\qquad\qquad=\frac{1}{Fr} {\tilde w}, \; \text{ in } L^2([0,T];\HP{-1}). \label{cont_dep_slow_limiting_eq_rho}
\end{align}
\end{subequations}
Clearly, we can take the inner product of \eqref{cont_dep_slow_limiting_eq_u} with ${\tilde \bu}_h$, \eqref{cont_dep_slow_limiting_eq_w} with ${\tilde w}$, \eqref{cont_dep_slow_limiting_eq_rho_z} with $\left<{\tilde \rho}\right>_z$ and \eqref{cont_dep_slow_limiting_eq_rho} with ${\tilde \xi}$. Using the divergence free condition $\nabla_h\cdot \bu_h^1$ $=$ $\nabla_h\cdot\bu_h^2$ $=$ $\nabla_h\cdot{\tilde \bu}_h$ $=$ $0$, $\pd{w^1}{z}$ $=$ $\pd{w^2}{z}$ $=$ $\pd{{\tilde w}}{z}$ $=$ $0$, integration by parts, H\"older inequality and Young's inequality we can show that
\begin{align}
&\frac{1}{2} \od{}{t}\norm{{\tilde \bu}_h}_{\LphP{2}}^2 \leq \abs{\left(({\tilde\bu}_h\cdot\nabla_h){\tilde \bu}_h^2, {\tilde\bu}_h\right)_h}\leq \norm{\nabla_h\bu_h^2}_{\LphP{\infty}} \norm{{\tilde \bu}_h}_{\LphP{2}}^2,\label{tilde_u_slow_limiting_eq} \\
&\frac{1}{2} \od{}{t} \left(\norm{{\tilde w}}_{\LphP{2}}^2 + \norm{\left<{\tilde \rho}\right>_z}_{\LphP{2}}^2 \right)\notag\\
&\qquad \leq \abs{\left(({\tilde\bu}_h\cdot\nabla_h)w^2, {\tilde w}\right)_h} + \abs{\left(({\tilde\bu}_h\cdot\nabla_h)\left<\rho^2\right>_z, \left<{\tilde \rho}\right>_z\right)_h} \notag \\
&\qquad\leq \norm{{\tilde \bu}_h}_{\LphP{2}}\left(\norm{\nabla_h w^2}_{\LphP{\infty}}\norm{{\tilde w}}_{\LphP{2}}+ \norm{\nabla_h\left< \rho^2\right>_z}_{\LphP{\infty}} \norm{\left<{\tilde \rho}\right>_z}_{\LphP{2}}\right)\notag \\
&\qquad\leq \left(\norm{\nabla_h w^2}_{\LphP{\infty}}+\norm{\nabla_h\left<\rho^2\right>_z}_{\LphP{\infty}}\right)D(t), \label{tilde_w_rho_z_slow_limiting_dynamics_eq}
\end{align}
and
\begin{align}
\frac{1}{2} \od{}{t}\norm{\nabla{\tilde \xi}}_{\LpP{2}}^2
&\leq \abs{\left((\bu_h^1\cdot\nabla_h) {\tilde \xi},\Delta{\tilde\xi}\right)} + \abs{\left(({\tilde \bu}_h\cdot \nabla_h) {\tilde\xi}, \Delta\xi^2\right)} + \abs{\left(w^1\pd{{\tilde \xi}}{z}, \Delta{\tilde\xi}\right)} \notag\\
& \quad+ \abs{\left({\tilde w}\pd{{\tilde\xi}}{z}, \rho^2\right)} +\frac{1}{Fr}\abs{\left({\tilde w},{\tilde\xi}\right)}\notag\\
&  \leq\sum_{j=1}^2\abs{\left(\pd{\bu_h^1}{x_j}, \nabla_h{\tilde \xi}\pd{{\tilde \xi}}{x_j}\right)}+ \abs{\left(({\tilde \bu}_h\cdot \nabla_h) {\tilde\xi}, \Delta\xi^2\right)}+ \abs{\left(\nabla_hw^1\pd{{\tilde\xi}}{z},\nabla_h{\tilde\xi}\right)}\notag\\
& \quad +\abs{\left({\tilde w}\pd{{\tilde\xi}}{z}, \rho^2\right)}+\frac{1}{Fr}\abs{\left({\tilde w},{\tilde\xi}\right)}\notag
\end{align}
\begin{align}
&\leq\norm{\nabla_h{\bu}_h^1}_{\LpP{\infty}}\norm{\nabla_h{\tilde\xi}}_{\LpP{2}}^2+ \norm{{\tilde\bu}_h}_{\LpP{2}}\norm{\nabla_h{\tilde\xi}}_{\LpP{2}}\norm{\Delta\xi^2}_{\LpP{\infty}}\notag\\
&\quad+\norm{\nabla_hw^1}_{\LpP{\infty}}\norm{\nabla{\tilde\xi}}_{\LpP{2}}^2+\norm{\rho^2}_{\LpP{\infty}}\norm{{\tilde w}}_{\LpP{2}}\norm{\nabla{\tilde\xi}}_{\LpP{2}} \notag\\
&\quad+ \frac{1}{Fr}\norm{{\tilde w}}_{\LpP{2}}\norm{{\tilde\xi}}_{\LpP{2}}. \notag
\end{align}
Using Poincar\'e inequality, Lemma \ref{Poincare}, we may conclude that
\begin{align}\label{tilde_xi_slow_limiting_dynamics_eq}
&\frac12\od{}{t}\norm{\nabla{\tilde\xi}}_{\LpP{2}}^2 \leq C(L, Fr)\left(\norm{\nabla_h\bu_h^1}_{\LphP{\infty}}+ \norm{\rho^2}_{\LpP{\infty}}+ \norm{\nabla_hw^1}_{\LphP{\infty}}\right)D(t),
\end{align}
where $C(L,Fr)$ is a constant that depends on the size of the domain $L$ and the Froude number $Fr$. Recall that by assumption $\bu_h^1$, $\rho^2$ and $w^1$ are strong solutions in the sense of Definition \ref{strong_solution_definition_inviscid_slow_dynamics_eq}, and they satisfy the estimates in Proposition \ref{nu_apriori_estimates} and Proposition \ref{nu_more_apriori_estimates} with corresponding initial data $\bu_h^{1,0}$, $\rho^{2,0}$ and $w^{1,0}$, respectively.
Thus, adding \eqref{tilde_u_slow_limiting_eq}, \eqref{tilde_w_rho_z_slow_limiting_dynamics_eq} and \eqref
{tilde_xi_slow_limiting_dynamics_eq} imply that
\begin{align}
\od{}{t} D{t} \leq C_{0}^{1,2}(T,L, Fr) D(t),
\end{align}
where $C_{0}^{1,2}(T,L, Fr)$ is a constant that depends on $T$, $L$ ,$Fr$ and the norms of the initial data $\bu_h^{1,0}$, $w^{1,0}$ and $\rho^{2,0}$. Integrating the above inequality on the time interval $[0,t]$, for $t\leq T$, proves \eqref{cont_dep_slow_dynamics_eq} and completes the proof.
\end{proof}
\bigskip

\section{Global existence and uniqueness of weak solutions using vorticiy formulation}\label{inviscid_slow_dynamics_eq_vorticity}
In this section, we aim to prove the global existence and uniqueness of weak solutions of the inviscid {\it slow-limiting dynamics model} in vorticity formulation. We define the vorticity $\omega = \nabla_h\times\bu_h$. As in the case of the two-dimensional incompressible Euler equations in vorticity formulation, we have the analogue  of the two-dimensional periodic Biot-Savart law \eqref{conv}.
We explicitly restrict ourselves to the unique solution $\bu_h$ of the elliptic system: $\nabla_h\times\bu_h=\omega$ and  $\nabla_h\cdot \bu_h =0$, that satisfies the side condition $\int_{\nT^2} \bu_h(x,y) \,dx\,dy = 0$.

Since we are considering periodic boundary conditions, we can write
\begin{align*}
 \rho(\bx) = \sum\limits_{k=-\infty}^{\infty} \rho_k(\bx_h)e^{\frac{2\pi}{L}ikz},\qquad \text{where} \quad  \rho_k(\bx_h) = \frac{1}{L}\int_0^L \rho(\bx)e^{-\frac{2\pi}{L}ikz}\, dz,
 \end{align*}
are periodic in $\nT^2$ for each $k$. Notice that $\rho_0(\bx_h) \equiv \left<\rho\right>_z(\bx_h)$. We may take the horizontal curl of equation \eqref{inviscid_slow_dynamics_eq_a} and take the Fourier transform of equation \eqref{inviscid_slow_dynamics_eq_c} and rewrite system \eqref{inviscid_slow_dynamics_eq} in vorticity-Fourier transform formulation as 
 \begin{subequations}\label{inviscid_slow_dynamics_eq_vorticity_k}
\begin{align}
\frac{\partial \omega}{\partial t} + (\bu_h\cdot\nabla_h)\omega &=0, \label{inviscid_slow_dynamics_eq_voriticity_k_a} \\
\frac{\partial w}{\partial t} + (\bu_h\cdot\nabla_h)w &=   - \frac{1}{Fr}\left<\rho\right>_z, \label{inviscid_slow_dynamics_eq_voriticity_k_b}\\
\pd{\left<\rho\right>_z}{t} + (\bu_h\cdot\nabla_h)\left<\rho\right>_z & = \frac{1}{Fr}w, \\
\frac{\partial \rho_k}{\partial t} + (\bu_h\cdot\nabla_h)\rho_k + ikw\rho_k&= 0 , \label{inviscid_slow_dynamics_eq_voriticity_k_c}\\
\nabla_h\cdot \bu_h = 0, \qquad \nabla\cdot\bu = 0, \qquad \omega &= \nabla_h\times\bu_h,\label{inviscid_slow_dynamics_eq_voriticity_k_d}\\
\omega(0;\bx_h) =\omega^0(\bx_h), \quad w(0;\bx_h)=w^0(\bx_h), \quad \left<\rho\right>_z(0;\bx_h) &= \left<\rho^0\right>_z(\bx_h),
\end{align}
and
\begin{align}
 \rho_k(0;\bx_h)&=\rho_k^0(\bx_h),\qquad \text{where} \quad \rho_k^0(\bx_h) = \frac{1}{L}\int_0^L \rho^0(\bx)e^{-\frac{2\pi}{L}ikz}\, dz,
\end{align}
\end{subequations}
for each $k\in \nZ \backslash \{0\}$.

\begin{definition}[Weak Solutions]
\label{weak_solution_definition_inviscid_slow_dynamics_eq_vorticity_k}
For any $1< q\leq \infty$, let $\omega^0(\bx_h) \in \LphP{\infty}$, $w^0(\bx_h) \in \LphP{\infty}$, $ \left<\rho^0\right>_z(\bx_h) \in \LphP{\infty}$ and $\rho^0_k(\bx_h)\in \LphP{q}$, for each $k\in \nZ \backslash \{0\}$. For any $T>0$, we say that $\omega(t;\bx_h)$, $w(t;\bx_h)$ and $\rho(t;\bx)= \sum\limits_{k=-\infty}^{\infty} \rho_k(\bx_h)e^{\frac{2\pi}{L}ikz}$ is a weak solution  of system \eqref{inviscid_slow_dynamics_eq_vorticity_k} on the time interval $[0,T]$ if
\begin{subequations}
\begin{align}
\omega \in L^\infty([0,T[;\LphP{\infty}),\\
w \in L^\infty([0,T];  \LphP{\infty}), \\
\left<\rho\right>_z \in  L^\infty([0,T];\LphP{\infty}), \\
\rho_k \in L^\infty([0,T]; \LphP{q}),
\end {align}
\end{subequations}
for each $k\in \nZ \backslash \{0\}$. Moreover, $\omega, w$ and $\rho_k$ satisfy \eqref{inviscid_slow_dynamics_eq_vorticity_k} in the distribution sense for each $k$; that is, for any $\phi(t;\bx_h), \chi_1(t;\bx_h), \chi_2(t;\bx_h)  \in {\mathcal D}([0,T]\times\nT^2)$, with $\phi(T,\bx_h) =\chi_1(T,\bx_h)= \chi_2(T,\bx_h)=0$, and any $\psi(t;\bx_h) \in {\mathcal D}([0,T]\times \nT^2)$ with $\psi(T,\bx_h) =0$, such that $\nabla_h\cdot\phi$ $=$ $\nabla_h\cdot\chi_1$ $=$ $\nabla_h\cdot\chi_2$ $=$  $\nabla_h\cdot\psi$ $=0$, the integral identities \eqref{strong_solution_inviscid_slow_dynamics_w}, \eqref{strong_solution_slow_dynamics_rho_z} and
\begin{subequations}
\begin{align}
& \int_0^T \left(\omega(s),\phi^{'}(s)\right)_h \,ds + \int_0^T \left((\bu_h(s)\cdot \nabla_h)\phi(s), \omega(s)\right)_h\,ds = - \left(\bu_h^0, \phi^0\right)_h,\label{weak_solution_inviscid_slow_dynamics_omega}\\
&\int_0^T \left(\rho_k(s), {\psi^{'}}(s)\right)_h\,ds   +\int_0^T\left((\bu_h(s)\cdot \nabla_h)\psi(s),\rho_k(s)\right)_h\,ds\notag \\ 
&\qquad \qquad \qquad \qquad\qquad= + ik\int_0^T\left(w(s)\rho_k(s), \psi(s)\right)_h\,ds-\left(\rho^0_k, \psi^0\right)_h,  \label{weak_solution_inviscid_slow_dynamics_rho}
\end{align}
\end{subequations}
hold for each $k\in \nZ  \backslash \{0\}$.
\end{definition}

\begin{theorem}[Global existence and uniqueness of weak solutions]\label{weak_existence_uniqueness_inviscid_vorticity}
For any  $1< q\leq \infty$, let $\omega^0 \in \LphP{\infty}$, $w^0\in\LphP{\infty}$, $\left<\rho^0\right>_z\in\LphP{\infty}$ and $\rho^0_k \in \LphP{q}$, for each $k\in \nZ  \backslash \{0\}$. Then, for any given $T>0$, system \eqref{inviscid_slow_dynamics_eq_vorticity_k} has a unique weak solution in the sense of Definition \ref{weak_solution_definition_inviscid_slow_dynamics_eq_vorticity_k}.
\end{theorem}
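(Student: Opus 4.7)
The plan is to exploit the triangular (one-way coupled) structure of system \eqref{inviscid_slow_dynamics_eq_vorticity_k} and solve it in three successive stages. In the first stage I solve the scalar two-dimensional Euler vorticity equation \eqref{inviscid_slow_dynamics_eq_voriticity_k_a} by direct application of Theorem \ref{Yudovich_2D_Euler}, producing a unique $\omega \in \LtLphP{\infty}{\infty}$ with $\norm{\omega}_{\LtLphP{\infty}{\infty}} = \norm{\omega^0}_{\LphP{\infty}}$. The horizontal velocity $\bu_h$ is then recovered through the periodic Biot--Savart law \eqref{conv}, and the elliptic estimate \eqref{elliptic_regularity_Yudovich} places $\bu_h \in L^\infty([0,T]; \WhP{1}{q})$ for every $q \in [2, \infty)$, hence also $\bu_h \in L^\infty([0,T]; \LphP{\infty})$ by Sobolev embedding on $\nT^2$ for any $q>2$.

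In the second stage I treat $\bu_h$ as a fixed divergence-free transport field and solve the closed linear $2\times 2$ system for $(w, \left<\rho\right>_z)$ consisting of \eqref{inviscid_slow_dynamics_eq_voriticity_k_b} and the equation for $\left<\rho\right>_z$. The construction mirrors the proof of Theorem \ref{weak_existence_uniqueness_inviscid}: mollify $\bu_h$ together with the initial data, produce a classical solution of the smoothed system, and pass to the limit by the Banach--Alaoglu theorem. The $L^\infty$ bound along the approximating sequence is supplied by \eqref{nu_w_rho_z_Linfty_estimate} of Proposition \ref{nu_apriori_estimates}, whose derivation only uses incompressibility of $\bu_h$ and not its smoothness. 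Uniqueness follows by the DiPerna--Lions-type argument cited in Theorem \ref{weak_existence_uniqueness_inviscid}, using $\bu_h \in L^\infty([0,T]; \WhP{1}{q} \cap \LphP{\infty})$ for every $q \in [2, \infty)$ together with the boundedness of the $1/Fr$ coupling.

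In the third stage, for each $k \in \nZ \setminus \{0\}$ I solve the scalar linear transport equation \eqref{inviscid_slow_dynamics_eq_voriticity_k_c} for $\rho_k$ with drift $\bu_h$ and zero-order coefficient $c = ikw$. Existence and uniqueness of $\rho_k \in L^\infty([0,T]; \LphP{q})$ follow from Theorems \ref{DiPerna_Lions_existence_1989} and \ref{DiPerna_Lions_uniqueness_1989}, whose hypotheses are satisfied since $\nabla_h\cdot \bu_h = 0$, $\bu_h \in L^\infty([0,T]; \WhP{1}{r})$ for any $r \in [2,\infty)$, and $c = ikw \in L^\infty([0,T]; \LphP{\infty})$ is inherited from stage two. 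The uniform $L^q$ bound comes from the formal computation of multiplying the equation by $\abs{\rho_k}^{q-2}\overline{\rho_k}$, integrating over $\nT^2$, and taking the real part: the $\bu_h\cdot\nabla_h$ contribution vanishes by incompressibility and the term $ikw\abs{\rho_k}^q$ is purely imaginary, so $\norm{\rho_k(t)}_{\LphP{q}} = \norm{\rho_k^0}_{\LphP{q}}$ for each $k$ and each $t \in [0,T]$.

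The main technical subtlety is that Theorems \ref{DiPerna_Lions_existence_1989}--\ref{DiPerna_Lions_uniqueness_1989} are stated on $\nR^n$ whereas the problem lives on $\nT^2$; this is handled either by periodic extension---the growth hypothesis $b/(1+\abs{\bx_h}) \in L^1([0,T]; L^1(\nR^2)) + L^1([0,T]; L^\infty(\nR^2))$ is trivially satisfied by any periodic $L^\infty$ drift---or by noting that the renormalisation argument behind DiPerna--Lions is purely local and goes through verbatim on the torus. A secondary point is that $\rho_k$ is complex-valued, but the DiPerna--Lions framework extends immediately by splitting into real and imaginary parts and viewing the equation as a $2 \times 2$ coupled transport system with bounded skew coupling, which has the same structure already handled in stage two.
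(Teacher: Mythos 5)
Your proposal is correct and follows essentially the same three-stage route as the paper: Yudovich's theorem plus the Biot--Savart elliptic estimate for $\omega$ and $\bu_h$, the mollification/Banach--Alaoglu argument from the proof of Theorem \ref{weak_existence_uniqueness_inviscid} for the coupled $(w,\left<\rho\right>_z)$ system, and the DiPerna--Lions theorems for each $\rho_k$. The extra details you supply---the formal $L^q$ conservation for $\rho_k$ via the purely imaginary zero-order term, and the remarks on transferring DiPerna--Lions from $\nR^n$ to $\nT^2$ and to complex-valued unknowns---are sound refinements of points the paper leaves implicit.
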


\begin{proof}
Given $\omega^0(\bx_h)\in\LphP{\infty}$ and $T>0$, by Theorem \ref{Yudovich_2D_Euler}, the two-dimensional incompressible Euler equations in vorticity formulation have a unique solution $\omega(t;\bx_h) \in L^\infty([0,T]; \LphP{\infty})$ such that
\begin{align}
\norm{\omega(t)}_{\LphP{p}} = \norm{\omega^0}_{\LphP{p}},
\end{align}
for any $p\in[1,\infty]$.
 Moreover, by the elliptic regularity estimate \eqref{elliptic_regularity_Yudovich}, $\bu_h(t;\bx_h)$ $\in$ $L^\infty([0,T];\WhP{1}{p})$ for any $p\in[1,\infty)$ and
\begin{align}
\norm{\bu_h(t)}_{\WhP{1}{p}} \leq C p \norm{\omega^0}_{\LphP{\infty}},
\end{align}
for all $t\in[0,T]$.  The existence and uniqueness of a weak solution $w(t;\bx_h)\in L^\infty([0,T];\LphP{\infty})$ and $\left<\rho\right>_z(t;\bx_h) \in L^\infty([0,T];\LphP{\infty})$ that satisfy the inviscid system \eqref{w_rho_0}  in the weak sense and the estimate \eqref{nu_w_rho_z_Linfty_estimate} is presented in the proof of Theorem \ref{weak_existence_uniqueness_inviscid}.

Since $\bu_h, w \in L^\infty([0,T];\LphP{\infty})$ and $\nabla_h\cdot\bu_h=0$, then the existence of a solution $\rho_k(t;\bx_h)\in L^\infty([0,T];\LphP{q})$ for $k\in \nZ\backslash \{0\}$ of \eqref{inviscid_slow_dynamics_eq_voriticity_k_c} will follow directly by Theorem \ref{DiPerna_Lions_existence_1989}. The uniqueness of the solution $\rho_k(t;\bx_h)$ for each $k\in \nZ \backslash \{0\}$ follow by Theorem \ref{DiPerna_Lions_uniqueness_1989}  since $\bu_h\in L^\infty([0,T];\WhP{1}{p})$ for any $p\in[1,\infty)$. This completes the proof. 
\end{proof}
\bigskip

\section*{Acknowledgements}
This paper is dedicated to Professor Marshall Slemrod on the occasion of his $70^{th}$ birthday as a token of friendship and admiration. The work of C.C. is supported in part by the NSF grant DMS-1109022. The work of  E.S.T.  is supported in part by the NSF grants  DMS-1009950, DMS-1109640 and DMS-1109645, and by the Minerva Stiftung/Foundation.

\bigskip



\begin{thebibliography}{10}


\bibitem{Bardos_1972}
C.~Bardos, {\it Existence et unicit\'{e} de la solution de l'\'{e}quation d'{E}uler en dimension deux}, J. Math. Anal. Appl. {\bf 40} (1972), 769--790.

\bibitem{Bardos_Titi_2007}
C. Bardos and E. S. Titi, {\it Euler equations of
incompressible ideal fluids}, Uspekhi Matematicheskikh Nauk, UMN
{\bf 62:3(375)} (2007), 5-Ð46. Also in Russian Mathematical Surveys,
{\bf 62(3)} (2007), 409--451.

\bibitem{Bardos_Titi_2010}
C. Bardos and E. S. Titi, {\it Loss of smoothness and energy conserving rough weak solutions for the {$3d$} {E}uler equations}, Discrete Contin. Dyn. Syst. Ser. S, {\bf 3} (2010) no. 2, 185--197.

\bibitem{Bardos_Titi_2013}
C. Bardos and E. S. Titi, \textit{Mathematics and turbulence: where do we stand?}, Journal of Turbulence, \textbf{14(3)} (2013),  42--76.



\bibitem{Beale_Kato_Majda_1984}
J. T. Beale, T. Kato, and A. Majda, {\it Remarks on the Breakdown of Smooth Solutions for the 3-D Euler Equations}, Commun. Math. Phys. {\bf 94}, 6--66 (1984).

%
%
%
%

\bibitem{Constantin_Foias_1988}
P.~Constantin and C.~Foias, \emph{Navier-{S}tokes {E}quations}, Chicago
  Lectures in Mathematics, University of Chicago Press, Chicago, IL, 1988.
  \MR{972259 (90b:35190)}

 \bibitem{Constantin_2007}
P. Constantin,  \textit{On the Euler equations of incompressible fluids} , Bull. Amer. Math. Soc., \textbf{44}
(2007), 603--621.

\bibitem{DiPerna_Lions_1989}
R.J. DiPerna and P.L. Lions, {\it Ordinary differential equations, transport theory and Sobolev spaces}, Invent. Math {\bf 98}, (1989), 511--547.

\bibitem{Evans_1998}
L. C. Evans, {\it Partial Differential Equations}, Graduate Studies in Mathematics, vol. 19, AMS, Providence, RT, 1998. MR 1625845(99e:35001)


%

\bibitem{Kato_1986}
T. Kato, {\it Remarks on the Euler and Navier-Stokes equations in $\nR^2$}, Proc. Symp. Pure Math., {\bf 45}, Amer. Math. Soc., Providence, (1986), 1-7

\bibitem{Kato_Ponce_1986}
 T. Kato and G. Ponce, {\it Well-posedness of the Euler and Navier-Stokes equations in the Lebesgue spaces $L^p_s(\nR^2)$}, Rev. Mat. Iberoamericana {\bf 2} (1986), no. 1-2, 73Ð88.

\bibitem{Kato_1990}
T.~Kato, {\it A remark on a theorem of {C}. {B}ardos on the 2{D}-{E}uler equation}, (U. C. Berkeley Pre-print), 1990.


\bibitem{Larios_Lunasin_Titi_2010}
 A. Larios, E. Lunasin, and E. S. Titi, { \it Global well-posedness for the 2d Boussinesq system without heat diusion and with either anisotropic viscosity or inviscid Voigt-regularization}. arXiv:1010.5024v1, 2010. (in submission).

\bibitem{Lichtenstein_1925}
L. Lichtenstein, {\it \"{U}ber einige {E}xistenzprobleme der {H}ydrodynamik homogener, unzusammendr\"uckbarer, reibungsloser {F}l\"ussigkeiten und die {H}elmholtzschen {W}irbels\"atze}, Math. Z.,m Mathematische Zeitschrift, {\bf 23} (1925) no. 1, 89--154.

\bibitem{Majda_Bertozzi_2002}
A. J. Majda and A. L. Bertozzi, {\it Vorticity and Incompressible Flow}, Cambridge Texts in Applied Mathematics, vol. 27, Cambridge University Press, Cambridge, 2002. MR 1867882 (2003a:76002)




\bibitem{Pedlosky_1987} J. Pedlosky,  {\it Geophysical Fluid Dynamics},
Springer-Verlag, New York, 1987.





\bibitem{Wingate_Embid_Cerfon_Taylor_2011}
B. A. Wingate, P. Embid, M. Holmes-Cerfon, and M. A. Taylor, {\it Low Rossby limiting dynamics for stably stratified flow
with finite Froude number}, J. Fluid Mech. {\bf676} (2011), 546Ð571. MR 2804461

\bibitem{Yudovich_1963}
V. I. Yudovich, {\it Non-stationary flow of an ideal incompressible liquid}, Zh. Vych. Mat. {\bf 6} (1963), 1407--1456 (English).

\end{thebibliography}
\end{document}